\renewcommand{\O}{\mathcal{O}}
\newcommand{\U}{\mathcal{U}}
\renewcommand{\P}{\mathbb{P}}
\newcommand{\SO}{{\mathcal{O}}}
\newcommand{\Z}{\mathbb{Z}}
\newcommand{\C}{\mathbb{C}}
\newcommand{\N}{\mathbb{N}}
\newcommand{\R}{\mathbb{R}}
\newcommand{\CP}{\mathbb{CP}}
\renewcommand{\S}{\mathbb{S}}
\newcommand{\Diff}{\operatorname{Diff}}
\newtheorem{proposition}{Proposition}[section]
\newtheorem{theorem}[proposition]{Theorem}
\newtheorem{definition}[proposition]{Definition}
\newtheorem{lemma}[proposition]{Lemma}
\newtheorem{remark}[proposition]{Remark}
\begin{document}

\title{Contact Blow--up}

\subjclass[2010]{Primary: 53D10. Secondary: 57R17.}

\keywords{contact structures, blow--up, squeezing.}

\author{Roger Casals}
\address{Instituto de Ciencias Matem\'aticas -- CSIC.
C. Nicol\'as Cabrera, 13--15, 28049, Madrid, Spain.}
\email{casals.roger@icmat.es}

\author{Dishant M. Pancholi}
\address{Chennai Mathematical Institute,
H1 SIPCOT IT Park, Siruseri, Kelambakkam 603103, India.}
\email{dishant@cmi.ac.in}

\author{Francisco Presas}
\address{Instituto de Ciencias Matem\'aticas -- CSIC.
C. Nicol\'as Cabrera, 13--15, 28049, Madrid, Spain.}
\email{fpresas@icmat.es}



\begin{abstract}
We introduce definitions of contact blow--up from several perspectives. Such different approaches to the contact blow--up are related. We prove that the contact topology coincides in the case of blow--ups along transverse embedded loops.
\end{abstract}

\maketitle
\section{Introduction}\label{sec:intro}
In his book {\it Partial Differential Relations} ~\cite{Gr}, M. Gromov proposed a definition of the blow--up operation in the contact category, see Exercise (c) on page 343. This article discusses this definition as well as related constructions.\\

\noindent Let $M$ be a smooth manifold and $S\stackrel{e}{\hookrightarrow}M$ an embedded submanifold. The normal bundle of $(S,e)$ in $M$ will be denoted by $\nu_M(S)$. Recall that it is defined through the short exact sequence of smooth vector bundles over $S$
$$0\longrightarrow TS\stackrel{e_*}{\longrightarrow} TM|_S\longrightarrow\nu_M(S)\longrightarrow 0.$$
Given a complex vector bundle $E\longrightarrow M$, we denote by $\P(E)$ the fiberwise projectivization of $E$.\\

\noindent Suppose the normal bundle $\nu_M(S)$ is a complex bundle, then we may produce a manifold $\widetilde{M}$, the topological blow--up of $M$ along $S$. It is defined as the connected sum
$$\widetilde{M}:=M\#_S\overline{\P(\nu_M(S)\oplus \C)}$$
of the manifolds $M$ and $\P(\nu_M(S)\oplus \C)$ with the reversed orientation along $S$. Let $\sigma_0$ be the zero section of $\nu_M(S)$. The submanifold $S$ is embedded in the first factor through $e$ and in the second as the section
\begin{eqnarray*}
s: S & \longrightarrow & \P(\nu_M(S)\oplus \C) \\
p & \longmapsto & \langle(\sigma_0\oplus 1) \rangle.
\end{eqnarray*}
In the category of symplectic manifolds the normal bundle is a complex bundle and the manifold $\widetilde{M}$ can be endowed with a symplectic structure. In this paper we address the question for contact manifolds.\\

\noindent In the above reference, M. Gromov conjectured that a contact blow--up construction exists along a contact submanifold $S$ embedded in a contact manifold $M$ provided a pair of hypotheses are satisfied. These are:\\

\begin{itemize}
\item[H1.] The contact submanifold $(S, \alpha_S=e^*(\alpha))$ is a Boothby--Wang manifold. See Definition \ref{def:bw}. In particular, the Reeb vector field associated to $\alpha_S$ has all its orbits periodic with the same period. Let $W$ be the quotient space of its orbits and $\pi: S \longrightarrow W$ the projection map.\\

\item[H2.] The normal symplectic bundle $\nu_M(S)$ is isomorphic to the pull--back of a symplectic bundle $V \longrightarrow W$ through $\pi$. That is, there exists an isomorphism $\nu_M(S)\cong\pi^*V$ of symplectic bundles.
\end{itemize}

\noindent These two hypotheses allow to give a definition. Nevertheless the contact blow--up will not be a contact structure on the topological blow--up $\widetilde{M}$ of $M$. We will first illustrate a reason for this in a simple example, see Section \ref{sec:pr}. We provide a definition producing a contact structure on a manifold constructed as a different connected sum with $M$. It has the same geometrical properties as the symplectic blow--up. It is this manifold we would rather call the contact blow--up.\\

\noindent Apart from the construction of contact manifolds, the contact blow--up construction is relevant for the existence problem of contact structures on $5$--manifolds. See \cite{CPP}.\\

\noindent The content of the paper is organized as follows. The Section \ref{sec:pr} provides a brief review of the topological blow--up. In Section \ref{sec:bw}, we introduce the classical Boothby--Wang construction \cite{BW}. It will be described with some concrete examples that shall be used later on. Then, three alternative constructions of contact blow--up are introduced:\\

\begin{itemize}
\item[1.] The contact blow--up for embedded transverse loops, produced as a surgery operation. This had been introduced in the article \cite{CPP}, it will be reviewed in Section \ref{sec:srgy}.
\item[2.] The contact blow--up defined {\it \`a la Gromov} is the content of Section \ref{sec:Gromov}.
\item[3.] The contact blow--up as a contact quotient is described in Section \ref{sec:quotient}.\\
\end{itemize}
These three constructions are inspired by the three alternative constructions for the symplectic blow--up: the {\it ad hoc} construction with explicit gluings, the description using frame bundles, found in pages 239 and 243 in ~\cite{MS} respectively, and the symplectic cut procedure discussed in ~\cite{Le2}. Finally, Section \ref{sec:un} relates these constructions in the case of transverse loops.\\

\noindent {\bf Acknowledgements.} We want to acknowledge K. Niederkr\"uger for useful discussions. In particular for asking us to relate the contact cut and the contact blow--up. This project was partially developed during the AIM Workshop {\it Contact Topology in Higher Dimensions}. The first and third authors are supported by the Spanish National Research Project MTM2010--17389. Second author would like to thank ICTP for offering a visiting position that allowed him to develop this article.


\section{Preliminaries}\label{sec:pr}
\noindent In this section we introduce the basic definitions, explain the topological blow--up procedure and discuss an example. 
\begin{definition}
A contact structure on a smooth manifold $M^{2n+1}$ is a maximally non--integrable smooth field $\xi$ of tangent hyperplanes.
\end{definition}

\noindent A contact manifold $(M,\xi)$ is a choice of a contact structure $\xi$ on $M$. The maximal non--integrability can be described in terms of local equations for $\xi$. A smooth field $\xi$ of tangent hyperplanes is maximally non--integrable if and only if for any $p\in M$ there exist an open subset $U\subset M$  containing $p$ and a $1$--form  $\alpha\in\Omega^1(U)$ such that $\xi|_U=\ker\alpha$ and $\alpha\wedge d\alpha^n \neq 0$. Equivalently, the form $d \alpha$ is non--degenerate when restricted to $\xi.$ In case the form $\alpha$ can be chosen to be globally defined, i.e.  $\alpha\in\Omega^1(M)$, the contact structure $\xi$ is called coorientable. A contact structure is cooriented if a choice of global contact form has been made. \\

\noindent Let $(M, \xi)$ be a cooriented contact manifold with fixed global contact form $\alpha$, i.e. $\alpha\in\Omega^1(M)$ satisfies $\ker \alpha= \xi$, $\alpha\wedge d\alpha^n\neq0$. A smooth submanifold $S \stackrel{e}{\hookrightarrow} M$ is called a \emph{contact submanifold} if the induced distribution $\xi_S=e^*(\xi)$ is a contact structure on $S$.\\

\noindent The notion of a blow--up  has its origins in algebraic geometry. First, we define the concept for a
complex vector space. See ~\cite{Ha} for further details.
\begin{definition}\label{def:lnbu}
The blow--up $\widetilde{\C}_0^{n+1}$ of the $n$--dimensional complex vector space $\C^{n+1}$ at the origin is the smooth manifold $\O(-1)=\{([l],p):p\in l\}\subset\CP^n\times\C^{n+1}$ along with the restriction of the projection onto the second factor $\sigma:\O(-1)\longrightarrow\C^{n+1}$.
\end{definition}

\noindent Note that $\sigma$ restricted to $\O(-1)\setminus \{([l],p):p\in l,p\neq0\}$ induces a diffeomorphism onto the image $\C^{n+1}\setminus\{0\}$. The projective space $\sigma^{-1}(\{0\})=\CP^n$ is called the exceptional divisor. The topological blow--up of $M$ along $S$ defined in the previous Section coincides with the previous definition if $S=\{0\}$ is the origin in $M=\C^{n+1}$. More generally, from the definition of $\widetilde{M}$ we can conclude the following
\begin{lemma}\label{prop:tpbl}
Let $M$ be a smooth manifold and $(S,e)$ a submanifold with complex normal bundle. There exists a smooth submanifold $E_S\subset\widetilde{M}$ diffeomorphic to the total space of a projective smooth bundle over $S$ such that, as smooth manifolds, $M\setminus S\cong\widetilde{M}\setminus E_S$.
\end{lemma}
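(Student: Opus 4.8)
The plan is to realize $E_S$ as the ``divisor at infinity'' of the connected--sum factor and to check that deleting it precisely reverses the surgery that produced $\widetilde M$. Write $r:=\rk_\C\nu_M(S)$ and consider inside $\P(\nu_M(S)\oplus\C)$ the sub--bundle $E_S:=\P(\nu_M(S))$, consisting of the classes $\langle v\oplus 0\rangle$ with $v\in\nu_M(S)$. This is a $\CP^{r-1}$--bundle over $S$, hence the total space of a projective smooth bundle, as required. Since the section $s(p)=\langle\sigma_0\oplus 1\rangle$ always has nonzero $\C$--component, $E_S$ is disjoint from $s(S)$. As the connected sum $\widetilde M=M\#_S\overline{\P(\nu_M(S)\oplus\C)}$ modifies the second factor only inside a tubular neighbourhood of $s(S)$, I would take that neighbourhood small enough to avoid $E_S$, so that $E_S$ survives as an embedded submanifold of $\widetilde M$.

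Next I would make the connected sum explicit. Let $U\subset M$ and $V\subset\P(\nu_M(S)\oplus\C)$ be open tubular neighbourhoods of $e(S)$ and $s(S)$; then $\widetilde M$ is obtained by removing $U$ and $V$ and gluing the two resulting boundaries, each a copy of the sphere bundle $S(\nu_M(S))$, along an orientation--reversing bundle isomorphism $\phi$. The main input is the affine identification
\[
\P(\nu_M(S)\oplus\C)\setminus E_S\;\cong\;\nu_M(S),\qquad \langle v\oplus\lambda\rangle\longmapsto v/\lambda,
\]
under which $s(S)$ corresponds to the zero section $\sigma_0$ and $V$ to an open disk--bundle neighbourhood $D$ of $\sigma_0$. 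Combining this with the previous paragraph gives
\[
\widetilde M\setminus E_S\;=\;(M\setminus U)\cup_\phi\big[(\P(\nu_M(S)\oplus\C)\setminus V)\setminus E_S\big]\;\cong\;(M\setminus U)\cup_\phi\big(\nu_M(S)\setminus D\big).
\]

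Finally, the tubular neighbourhood theorem provides a diffeomorphism of $U$ onto an open disk bundle of $\nu_M(S)$ taking $e(S)$ to $\sigma_0$; hence $U\setminus e(S)\cong\nu_M(S)\setminus\sigma_0$, which is diffeomorphic to $\nu_M(S)\setminus D$ as both are diffeomorphic to $S(\nu_M(S))\times(0,\infty)$. Patching the identity on $M\setminus U$ to this identification yields the sought diffeomorphism $M\setminus S\cong\widetilde M\setminus E_S$. I expect the only delicate point to be the compatibility of the gluing map $\phi$ with the affine chart and with the tubular--neighbourhood trivialisation: this is what must be verified for the two pieces to patch into a genuine diffeomorphism. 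It holds by construction, since both boundaries are canonically the sphere bundle $S(\nu_M(S))$ and $\phi$ is a bundle isomorphism between them; fiberwise the entire picture is exactly the statement, recorded after Definition \ref{def:lnbu}, that the blow--down $\sigma$ restricts to a diffeomorphism $\O(-1)\setminus\CP^{r-1}\cong\C^{r}\setminus\{0\}$ on each normal fiber.
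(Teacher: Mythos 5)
Your argument is correct, and it fleshes out exactly what the paper leaves implicit: the lemma is stated there without proof, as an immediate consequence of the definition of $\widetilde{M}$ as a connected sum along $S$. Your identification of $E_S$ with the divisor at infinity $\P(\nu_M(S)\oplus 0)$, the affine chart $\P(\nu_M(S)\oplus\C)\setminus E_S\cong\nu_M(S)$ sending $s(S)$ to the zero section, and the final patching via the tubular neighbourhood theorem are precisely the intended unwinding of that definition (and consistent with the paper's example, where $S\cong\S^1$ in a $5$--fold yields $E_S\cong\S^1\times\CP^1$), so there is nothing to object to beyond the gluing--compatibility point you already flag and resolve.
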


\noindent  The topological blow--up can be performed along any complex submanifold $S$ of a complex
manifold  $M$. In this case the blown--up manifold $\widetilde{M}$ inherits a canonical  complex structure. Analogously, if $(M,\omega)$ is a symplectic manifold and $S$ a symplectic submanifold, the topological blow--up manifold $\widetilde{M}$ can also be endowed with a symplectic structure. In the symplectic case there is no uniqueness, see \cite{MS}.
The topological blow--up could also be performed along a contact submanifold of a contact manifold because the normal bundle is symplectic and hence it is also complex.

\begin{remark} \label{rem:key}
1. Suppose the normal bundle $\nu_M(S)$ splits as a direct sum of isomorphic complex line bundles $L$:
$\nu_M(S)=L\oplus \stackrel{r)}{\cdots} \oplus L$. Then there is a second projection map $\pi_2: \nu_M(S) \longrightarrow \CP^{r-1}$ defined as follows. Given a point $p\in S$, let $s_p\in L_p$ be non--zero vector in the fiber. Then a point $[l_1, \cdots, l_r] \in \nu_M(S)_p$ is mapped to $\pi_2([l_1, \cdots, l_r])=[l_1/s_p, \cdots, l_r/s_p]$. It is simple to verify that the map is well--defined, i.e. independent of the choice of vector $s_p$.\\

\noindent 2. The hypothesis above is satisfied in some cases. For instance, let $S$ be the base locus of a projective, resp. symplectic, Lefschetz pencil. Then $S$ conforms the hypothesis for $r=2$. In such case the fibers of $\pi_2$ are projective, resp. symplectic. This also occurs with contact pencils, see \cite{Pr1}.
\end{remark}

\noindent{\bf Example}: {\it Let $(M^5, \xi)$ be a $5$--dimensional contact manifold and $S$ is a $1$--dimensional contact compact submanifold, i.e. a transverse embedded loop. If we perform a topological blow--up along $S$, the exceptional divisor is $E\cong\S^1 \times \CP^1\cong\S^1\times\S^2$. We are in the hypothesis of the previous Remark: $\nu(S^1)$ is trivial. Therefore we have a projection $\pi_2:\S^1 \times \S^2 \to \S^2$. In the contact case, if we assume that $E$ is a contact submanifold, it is not possible to ensure that the fibers of such projection map are contact:  there is no contact distribution on $\S^1\times\S^2$ whose fibers are all transverse to the contact structure, see ~\cite{Gi}.} \\

\noindent In the previous example, the non--transversality of the fibers occurs only because we are using the topological blow--up as our blown--up manifold. We will further argue from different perspectives that the blown--up manifold $\widetilde{M}$ we should consider in contact topology is not the topological blow--up discussed above. Instead, the correct manifold is obtained through a procedure that substitutes $S\cong\S^1$ by the standard contact sphere $\S^3$, not $\S^1\times\S^2$. In such a case, the natural projection map $\pi:\S^3 \longrightarrow \CP^1$ is the Hopf fibration, whose fibers are transverse to the contact structure.\\



\section{Boothby--Wang Constructions}\label{sec:bw}
In this section we explain the construction of a contact manifold from an integral symplectic manifold as developed in ~\cite{BW}. It will be used in understanding the contact structure on the manifold obtained after a contact blow-up.\\

\noindent A symplectic manifold $(W,\omega)$ is called integral if the class $[\omega]$ lies in the image of the map $H^2(M,\Z) \longrightarrow H^2(M,\R)$, i.e. the periods of $\omega$ are integers. Such a form $\omega$ is called integral. For instance, a K\"ahler form on a complex compact manifold is integral if and only if the manifold is a smooth projective algebraic variety. In the definition above a circle has length $1$. Note that the lift of $[\omega]$  to $H^2(M,\R)$ may not be unique if $H^2(M,\Z)$ contains torsion elements.\\

\noindent Given an integral form $\omega\in H^2(M,\R)$ there exists a Hermitian complex line bundle $L_{\omega}$ admitting a  compatible connection whose curvature is $-i\omega$. See ~\cite{BT} for the details. This leads to the following

\begin{definition} \label{def:bw}
Let $(W,\omega)$ be an integral symplectic manifold. The Boothby--Wang manifold $\S_k(W)$ is the contact manifold whose total space is the unit circle bundle associated to the line bundle $L_{k\omega}$ and its contact structure is defined as the restriction of any connection with curvature $-ik\omega$ to the circle bundle. 
\end{definition}

\begin{remark}\label{rek:Uniqueness_BW}
The contact structure is independent of the choice of connection. Indeed, the space of choices for a connection as above is an affine space modelled on the vector space of flat connections and hence is contractible. The stability theorem of J. Gray applies to ensure the uniqueness up to contactomorphisms  of the contact structures.
\end{remark}

\noindent For the case $k=1$ we will sometimes omit the subindex. Note that the topology of the total space varies with the parameter $k$. The exact relationship
between the topology  and the parameter $k$  is the content of the following
\begin{lemma} \label{lem:cov}
Let $(W,\omega)$ be a symplectic manifold. Then the Boothby--Wang manifold $\S_1(W)$ is a $k$--covering of $\S_k(W)$.
\end{lemma}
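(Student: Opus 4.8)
The plan is to trace the dependence of the Boothby--Wang construction on the parameter $k$ back to the algebraic operation of raising a line bundle to its $k$-th tensor power, and then to exhibit the covering $\S_1(W)\too\S_k(W)$ as the fiberwise $k$-th power map of unit circle bundles. Accordingly, I would first identify the bundle data. If $\nabla$ is a Hermitian connection on $L_\o$ with curvature $-i\o$, then the induced connection $\nabla^{\ox k}$ on the tensor power $L_\o^{\ox k}$ has curvature $-ik\o$, so that $(L_\o^{\ox k},\nabla^{\ox k})$ is an admissible choice of bundle with connection for $\S_k(W)$ in the sense of Definition \ref{def:bw}. By Remark \ref{rek:Uniqueness_BW} the contact structure on $\S_k(W)$ does not depend on this choice, so there is no loss in computing with this particular model, i.e. in taking $L_{k\o}=L_\o^{\ox k}$ as Hermitian bundles with connection.

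Next I would write down the covering. Denoting by $S(\cdot)$ the associated unit circle bundle, I define
$$\p:S(L_\o)\too S(L_\o^{\ox k}),\qquad \p(v)=v^{\ox k}.$$
Since the metric on a tensor power is the product metric, $|v^{\ox k}|=|v|^k=1$, so $\p$ indeed maps into the unit circle bundle and covers the identity on $W$. On each fiber $\p$ is the homomorphism $z\mapsto z^k$ of $U(1)$, which is a $k$-fold covering; in terms of transition functions $g_{ij}$ of $L_\o$, those of $L_\o^{\ox k}$ are $g_{ij}^k$ and one checks immediately that $\p$ is well defined. Hence $\p$ is a $k$-fold covering map from $\S_1(W)=S(L_\o)$ onto $\S_k(W)=S(L_\o^{\ox k})$.

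Finally I would verify that $\p$ is a covering of contact manifolds, not merely of smooth ones. In a local trivialization the suitably normalized connection form of $\nabla$ restricts to the contact form $\a_1$ of $\S_1(W)$, while that of $\nabla^{\ox k}$ is $k$ times it and restricts to the contact form $\a_k$ of $\S_k(W)$; since $\p$ acts as $z\mapsto z^k$ along the fibers one obtains $\p^*\a_k=k\,\a_1$, and therefore $\ker\p^*\a_k=\ker\a_1$. Thus the contact distribution of $\S_1(W)$ is the pullback of that of $\S_k(W)$, so $\p$ is a contact $k$-fold covering.

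The argument is not deep; the one point that deserves care is the direction of the covering together with this contact compatibility. Although $L_{k\o}$ is \emph{more} twisted, it is $\S_1(W)$ that covers $\S_k(W)$, precisely because the bundle map realizing the $k$-th tensor power restricts to $z\mapsto z^k$ on fibers. This is consistent with the model case $W=\CP^1$, where $\S_1(\CP^1)=\S^3$ is the $k$-fold cover of the lens space $\S_k(\CP^1)=\S^3/\Z_k$.
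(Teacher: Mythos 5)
Your proof is correct and follows essentially the same route as the paper: both exhibit the covering as the fiberwise $k$-th tensor power map $u\mapsto u^{\otimes k}$ between the unit circle bundles of $L_\omega$ and $L_\omega^{\otimes k}$, using that the induced connection on the tensor power has curvature $-ik\omega$. The paper phrases the $k$-to-$1$ property via the $\Z_k$ deck action $u\mapsto e^{2\pi ic/k}u$ and merely asserts contact compatibility, whereas you verify it directly through $\varphi^*\alpha_k=k\,\alpha_1$; these are only cosmetic differences.
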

\begin{proof}
We fix a Hermitian connection on $L$, this induces a Hermitian connection on $L^{\otimes k}$. Define the unitary non--linear map between line bundles
$$L\longrightarrow L^{\otimes k},\quad u\longmapsto u^{\otimes k}.$$
It preserves the connections on the two bundles. There exists a unitary connection--preserving action of $\Z_k$, the cyclic group of order $k$, in $L$ given by
$$\Z_k\times L\longrightarrow L,\quad (c;u)\longmapsto e^{2\pi ic/k}u.$$
This action induces the trivial action in $L^{\otimes k}$ and thus becomes the deck transformation group of a covering between the total spaces of the associated
circle bundles. This map is certainly compatible with the contact structures.
\end{proof}

\noindent{\bf Examples}: 1. Let $L(k;1,\ldots,1)$ be a lens space, i.e. the orbit space of the action
$$\Z_k\times\S^{2n-1}\longrightarrow\S^{2n-1},\quad 1\cdot(z_1,\ldots,z_n)=(e^{2\pi i/k}z_1,e^{2\pi i/k} z_2, \ldots,e^{2\pi i/k} z_n).$$
The lens space  naturally inherits a contact structure $\xi_{L}$ from the standard contact structure of $\S^{2n-1}$ induced by the complex tangencies. Lemma \ref{lem:cov} provides a contactomorphism between $\S_k(\CP^{n-1})$ and $(L(k;1,\ldots,1), \xi_{L}).$\\

\noindent 2. Consider the $2$--torus $T^2=\S^1\times\S^1$ and $\tau$ an integral area form with total area one. Then the Boothy--Wang manifolds $\S_k(T^2)$ associated to $(T^2,\tau)$ give rise to quotients of the Heisenberg group by discrete subgroups $\Gamma_k$ and thus provide several examples of contact nilmanifolds different from the 3--torus.\\

\noindent The construction of the contact blow--up will involve the quotient of the product of two Boothby--Wang manifolds. With this in mind, we proceed to describe the Boothby--Wang construction when the base sympletic manifold is a product. 
We show that the Boothby--Wang construction and the Cartesian product {\it commute}. In precise terms, let $\S_{(b,a)}(W_1\times W_2)$ be the Boothby--Wang manifold associated to
$$(W_1\times W_2,b\pi_1^*\omega_1+a\pi_2^*\omega_2),$$
then we have the following:
\begin{theorem}\label{thm:bw}
Let $(W_1,\omega_1)$ and $(W_2,\omega_2)$ be symplectic manifolds and $a,b\in\Z$ a pair of coprime integers. Consider the product $\S(W_1)\times\S(W_2)$ of the Boothby--Wang manifolds and the action
$$\xymatrix{
\varphi_{(a,-b)}: \S^1\times\S(W_1)\times\S(W_2)\longrightarrow \S(W_1)\times\S(W_2)\\
(p,q) \longmapsto \theta\cdot(p,q)=(a\theta\cdot p,-b\theta\cdot q)
}$$
Then the space of orbits is a manifold diffeomorphic to $\S_{(b,a)}(W_1\times W_2)$. This space of orbits carries a contact structure induced by a connection with curvature
$$b\pi_1^*\omega_1+a\pi_2^*\omega_2.$$

\noindent and hence is contactomorphic to $S_{(b,a)}(W_1 \times W_2).$

\end{theorem}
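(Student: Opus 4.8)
The plan is to realize the orbit space as a principal circle bundle over $W_1\times W_2$ carrying an explicit connection of the prescribed curvature, and then to invoke the uniqueness recorded in Remark~\ref{rek:Uniqueness_BW} to identify it with the Boothby--Wang manifold $\S_{(b,a)}(W_1\times W_2)$.

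First I would set $X=\S(W_1)\times\S(W_2)$ and exploit the fact that each factor is a principal $\S^1$--bundle, so that $X$ carries a free $T^2=\S^1\times\S^1$--action with quotient $X/T^2=W_1\times W_2$. The circle $C$ generated by $\varphi_{(a,-b)}$ is the image of the homomorphism $\theta\mapsto(a\theta,-b\theta)$ into $T^2$; because $\gcd(a,b)=1$ this map is injective, so $C$ is an embedded subcircle and its action on $X$ is free. Hence the orbit space $X/C$ is a smooth manifold, and since $C$ is a (necessarily normal) subgroup of the abelian group $T^2$, the residual action of $T^2/C\cong\S^1$ makes $X/C\to W_1\times W_2$ a principal circle bundle. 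Using B\'ezout I would choose integers $c,d$ with $ad+bc=1$, so that $\left(\begin{smallmatrix} a & c\\ -b & d\end{smallmatrix}\right)\in\mathrm{SL}_2(\Z)$; this unimodular change of basis exhibits $C$ as a coordinate circle and identifies the fundamental vector field of the complementary quotient circle with $cR_1+dR_2$, where $R_i$ is the generator of the $i$--th fibre rotation (equivalently, the Reeb field of $\alpha_i$).

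Next I would descend a connection form. Let $\alpha_1,\alpha_2$ denote the contact forms of the two factors, pulled back to $X$; they satisfy $\alpha_i(R_j)=\delta_{ij}$, $d\alpha_i=\pi_i^*\omega_i$, and are $T^2$--invariant. Setting $\beta=b\alpha_1+a\alpha_2$, the generator of $C$ is $aR_1-bR_2$ and one computes $\beta(aR_1-bR_2)=ab-ab=0$; since $\beta$ is also $C$--invariant it is basic, hence descends to a $1$--form $\bar\beta$ on $X/C$, while $d\beta=b\pi_1^*\omega_1+a\pi_2^*\omega_2$ descends to its curvature. Evaluating on the complementary generator gives $\beta(cR_1+dR_2)=bc+ad=1$, so $\bar\beta$ is an honest connection form for $X/C\to W_1\times W_2$ with curvature $b\pi_1^*\omega_1+a\pi_2^*\omega_2$. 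Writing $\dim W_i=2n_i$, the contact condition $\bar\beta\wedge(d\bar\beta)^{n_1+n_2}\neq0$ then follows because $(b\omega_1+a\omega_2)^{n_1+n_2}$ is proportional to the volume form $\omega_1^{n_1}\wedge\omega_2^{n_2}$ whenever $a,b\neq0$.

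This produces on $X/C$ exactly the defining data of $\S_{(b,a)}(W_1\times W_2)$: a principal circle bundle over $W_1\times W_2$ with a connection of curvature $b\pi_1^*\omega_1+a\pi_2^*\omega_2$. In particular the two bundles share the same Euler class and are diffeomorphic, and the uniqueness of the Boothby--Wang contact structure from Remark~\ref{rek:Uniqueness_BW} upgrades this to a contactomorphism. The main obstacle I anticipate is the lattice bookkeeping around the subcircle $C$ and its complement: one must verify both the freeness of the $C$--action and the existence of the unimodular complement, which is exactly where the coprimality $\gcd(a,b)=1$ is used twice, and one must track the sign in $\varphi_{(a,-b)}$ carefully so that $\beta=b\alpha_1+a\alpha_2$ is simultaneously basic for $C$ and of the correct curvature. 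Everything else reduces to the routine verification that a basic connection form descends to a connection form downstairs.
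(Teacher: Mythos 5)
Your proposal is correct and follows essentially the same route as the paper: both realize the orbit space as a principal circle bundle over $W_1\times W_2$ using the coprimality of $a$ and $b$, identify a connection with curvature $b\pi_1^*\omega_1+a\pi_2^*\omega_2$, and conclude via the uniqueness in Remark~\ref{rek:Uniqueness_BW}. The only difference is one of explicitness: where the paper encodes the residual circle and the curvature claim in the exact sequences $1\to H\to G\to G/H\to 1$ and $0\to\Z\xrightarrow{(a,-b)}\Z\oplus\Z\xrightarrow{(b,a)^t}\Z\to 0$, you write out the Bézout complement and the basic form $\beta=b\alpha_1+a\alpha_2$ directly, which is a useful unpacking of the same argument.
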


\begin{proof} Let $G=\S^1\times\S^1$ and $H\cong\S^1\subset G$ be the subgroup defined as the image of the embedding
$$\varphi_{(a,-b)}:\S^1\longrightarrow H\subset G,\quad \sigma\longmapsto (a\sigma,-b\sigma).$$
Let $P$ be the $G$--principal bundle with base space $W_1\times W_2$ induced by the $\S^1$--principal bundles $\S_1(W_1)$ and $\S_1(W_2)$. Our aim is to describe $P/H$ as a bundle over $W_1\times W_2$. In general $P\longrightarrow P/H$ is not a $H$--principal bundle but it is the case when both $G$ and $H$ are closed Lie groups and $H$ is
a normal sub-group of $G.$ Actually, they are abelian and since $(a,b)=1$, $P/H$ is also a $G/H$--principal bundle over $W_1\times W_2$. Taking into account the exact group sequence
$$1\longrightarrow \S^1\cong H \longrightarrow G\longrightarrow G/H\cong\S^1\longrightarrow 1$$
where the second morphism is given by multiplication by $(b,a)$, we conclude that the space of orbits $P/H$ is a manifold diffeomorphic to $\S_{(b,a)}(W_1\times W_2)$. The claim about the  connection and the associated curvature follows from the short exact sequence
$$0\longrightarrow \Z \stackrel{(a,-b)}{\longrightarrow} \Z\oplus\Z\stackrel{(b,a)^t}{\longrightarrow} \Z\longrightarrow 0.$$
\noindent Finally, it follows from Remark~\ref{rek:Uniqueness_BW} that the manifolds are, in fact,  contactomorphic.

\end{proof}

\noindent There are a few simple cases worth mentioning.\\

\noindent {\bf Examples}: 1. Let $W_1=\{pt.\}$ and $W_2$ arbitrary. Then neither the topology of the resulting space nor the contact structure depend on $b$. Indeed, $\S^1\times\S_1(W_2)/\sim$ is diffeomorphic to
$$\S_{(b,a)}(pt.\times W_2)\cong\S_a(W_2).$$
Analogously, the parameter $a$ is vacuous if $W_2=\{pt.\}$. In particular, $\S^1\times\S^1$ quotiented by any $(a,-b)$ coprime $\S^1$--action is diffeomorphic to $\S^1$.\\

\noindent 2. Let $W_1=W_2=\CP^1$ be symplectic manifolds with the Fubini--Study form. Then the space $\S_{b,a}(\CP^1\times\CP^1)$ is diffeomorphic to $\S^3\times \S^2$ regardless of the values $a,b\in\Z^+$, see ~\cite{WZ} for a proof of this fact. Further, the symplectic structure of the associated line bundle depends only on $a-b$. Note that there is an alternative construction of a contact structure in $\S^3\times \S^2$ using an open book decomposition with $T^*\S^2$ pages and an even power of a Dehn twist as monodromy, however such a procedure may only produce vanishing first Chern class and is thus different from $\S_{b,a}(\CP^1\times\CP^1)$ if $a\neq1$. See ~\cite{Ko} for more.\\

\noindent 3. The previous example can be generalized to construct contact structures on $\S^{2n+1}\times\S^2$. Indeed the result implies that the total space of $\S_{(1,k)}(\CP^n\times\CP^1)$ is a $\S^{2n+1}$--bundle over $\S^2$. The Hopf action is explicit enough for the classifying map to be described as the element $$(n+1)k\in\Z_2\cong\pi_1(SO(2n+2)).$$
Consequently the resulting manifold is diffeomorphic $\S^{2n+1}\times\S^2$ if $n$ is odd or $k$ is even.

\begin{remark}
We would like to remark that it is not known whether the product of any contact manifold with the $2$--sphere admits a contact structure.
\end{remark}

\noindent It will be essential for the contact blow--up construction to be able to extend a connection on a submanifold to a global connection, let us now prove that this is possible under suitable conditions:

\begin{lemma} \label{lem:exten_bw}
Let $S$ be a closed submanifold of $(M^{2n}, \omega)$, possibly with smooth boundary, and $L$ the line bundle associated to $\omega$. Assume that the restriction morphism $H^1(M) \longrightarrow H^1(S)$ is surjective and let $A_S$ be a connection over $L_{|S}$ whose curvature is $-i\omega$. Then, there is a connection $A$ on $L$ with curvature $-i\omega$ such that its restriction to $S$ is $A_S$.
\end{lemma}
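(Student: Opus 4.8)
The plan is to turn the extension of connections into an extension problem for \emph{closed} $1$-forms, where the cohomological hypothesis is exactly the ingredient needed. First I would fix, as a reference, one global connection $A_0$ on $L$ with curvature $-i\omega$; such a connection exists by the discussion preceding Definition~\ref{def:bw}. Since $L$ is a Hermitian line bundle, any two unitary connections differ by a globally defined $\mathfrak{u}(1)$-valued (i.e.\ purely imaginary) $1$-form, and their curvatures differ by the exterior derivative of that form. Consequently the set of connections on $L$, and likewise on $L|_S$, with a prescribed curvature is an affine space modelled on the space of closed imaginary $1$-forms. Writing $A_S - A_0|_S = i\,b_S$ with $b_S \in \Omega^1(S;\R)$, the equality $F_{A_S}=F_{A_0|_S}=-i(\omega|_S)$ forces $db_S=0$, so $b_S$ is a closed real $1$-form on $S$.

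It then suffices to produce a closed real $1$-form $b\in\Omega^1(M;\R)$ with $b|_S=b_S$. Indeed, $A:=A_0+i\,b$ is again a unitary connection; its curvature is $-i\omega + i\,db = -i\omega$ because $b$ is closed; and its restriction is $A_0|_S + i\,b_S = A_S$. So the whole statement reduces to extending the closed form $b_S$ on $S$ to a closed form on $M$.

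Here is where the surjectivity of $H^1(M;\R)\to H^1(S;\R)$ enters. I would choose a closed $1$-form $\gamma$ on $M$ whose restriction to $S$ represents the de Rham class $[b_S]\in H^1(S;\R)$; surjectivity guarantees such a $\gamma$ exists. Then $b_S-\gamma|_S$ is exact on $S$, say $b_S-\gamma|_S=df_S$ for some $f_S\in C^\infty(S)$. Extending $f_S$ to a smooth function $f$ on $M$ --- routine for a closed, properly embedded submanifold via a tubular neighborhood and a cut-off, the possible boundary of $S$ causing no difficulty --- and setting $b:=\gamma+df$ yields a closed form with $b|_S=\gamma|_S+df_S=b_S$, as required. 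The main obstacle is precisely this last extension: a naive partition-of-unity extension of $b_S$ need not remain closed, and the obstruction to extending a closed form \emph{as a closed form} is exactly that its class lie in the image of the restriction map. The hypothesis annihilates this obstruction, and the only remaining technical point, the smooth extension of the primitive $f_S$, is standard.
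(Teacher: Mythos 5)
Your proof is correct and follows essentially the same route as the paper: fix a reference connection $A_0$ with curvature $-i\omega$, observe that $A_S - A_0|_S$ is a closed $1$-form, use the surjectivity of $H^1(M)\to H^1(S)$ to realize its class by a global closed form, and absorb the remaining exact discrepancy by extending a primitive function from $S$ to $M$. The only (harmless) difference is that you track the factor of $i$ in the $\mathfrak{u}(1)$-valued connection forms explicitly, which the paper suppresses.
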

\begin{proof}
Let $A_0$ be a connection on the line bundle $L\longrightarrow M$ with curvature $-i\omega$. Denote
$i:S \longrightarrow M$, then $A_S - i^*A_0= \beta_S$ is a closed $1$--form
over $S$. In order to complete our argument we need to extend $\beta_S$ to a global closed 
$1$--form.\\

\noindent By hypothesis the map $H^1(M) \longrightarrow H^1 (S)$ is a surjection. Therefore there exists a
cohomology class $[\beta]$ on $H^1(M)$, such
that restricted to $S$ coincides with $[\beta_S]$. Its difference over $S$ will be the trivial class on $H^1(S)$, so $
\beta_S-i^* \beta= dH_S$, for some smooth function $H_S:S \longrightarrow \R$. We extend
$H_S$ to a global smooth function $H:M \longrightarrow \R$.
The form $A_0+\beta+dH$ is the required global connection with curvature $-i\omega$ and extending $A_S$.
\end{proof}
\section{Surgery along transverse loops}\label{sec:srgy}
Let $(M^{2n+1},\xi)$ be a contact manifold. In this section we recall the blow--up construction from Section 5 in ~\cite{CPP}. It is an operation defined in
a neighborhood of a transversely embedded loop. Topologically it consists of a surgery along the loop: the interior of $\S^1\times B^{2n}$ is removed and a tubular neighbourhood of the $(2n-1)$--sphere $B^2\times\S^{2n-1}$ is glued along the common boundary $\S^1\times \S^{2n-1}$. The sphere $\{ 0 \} \times \S^{2n-1} $ whose neighbourhood is attached is called the exceptional divisor. Let us discuss this surgery operation in the contact category.\\

\noindent Consider the manifold $T=\S^1\times(0,1)\times\S^{2n-1}$ with spherical coordinates $(\theta,r,\sigma)$.
Let $\alpha_{std}=(dr\circ i)|_{\S^{2n-1}}$ be the standard contact form for the contact structure
$$\xi=T\S^{2n-1}\cap i(T\S^{2n-1})$$
on the sphere $\S^{2n-1}\subset\C^n$. Define the following two contact forms in $T$:
\begin{equation} \eta=d\theta-r^2\alpha_{std},\quad \lambda=r^2d\theta+\alpha_{std} \label{eq:std}.
\end{equation}



Fix an integer $l\in\Z$ and consider the diffeomorphism
\begin{equation}
\begin{array}{cccc}
\phi_l:& T & \longrightarrow & T\\   \label{eq:change}
& (\theta,r,z) & \longrightarrow & (\theta,r,e^{2\pi i l\theta}z)
\end{array}
\end{equation}
It pulls--back the contact form $\eta$ to $\overline{\lambda}=(-r^2)\cdot [(l-r^{-2})d\theta+\alpha_{std}]$.\\

\noindent Given a subset $C\subset M$, let $\U(C)$ denote a small neighbourhood of $C$ in $M$. These ingredients suffice to prove the following:


\begin{theorem} \label{thm:blow-up}
$($Thm. 5.1 in ~\cite{CPP}$)$ Let $(M^{2n+1}, \xi)$ be a contact manifold. Let $S\subset M$ be a smooth transverse loop in $M$. There exists a manifold $\overline{M}$  satisfying the following conditions:
\begin{itemize}
\item[-] There exists a contact structure $\overline{\xi}$ on $\overline{M}$.
\item[-] There exists a codimension--$2$ contact submanifold $E$ inside $\overline{M}$ with trivial normal bundle. The manifold $(E,\overline{\xi})$ is contactomorphic to the standard contact sphere $(\S^{2n-1},\xi)$.
\item[-] The manifolds $(M\setminus \U(S), \xi)$ and $(\overline{M}\setminus E, \overline{\xi})$ are contactomorphic.
\end{itemize}
The manifold $(\overline{M},\overline{\xi})$ will be called the contact surgery blow--up of $M$ along $S$. The contact submanifold $(E,\overline{\xi})$ is called the exceptional divisor.
\end{theorem}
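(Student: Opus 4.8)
The plan is to build $\overline{M}$ as an explicit cut-and-paste of $M$ supported near $S$, matching the topological surgery described above, and to make every gluing a contactomorphism so that $\overline{\xi}$ comes for free. First I would set up the two local models. Since $S$ is transverse to $\xi$, the standard neighbourhood theorem for transverse curves gives $\U(S)$ contactomorphic to the filled model $(\S^1\times B^{2n},\ker\eta)$, where $\eta=d\theta-r^2\alpha_{std}$, the coordinates $(r,\sigma)$ are polar on $B^{2n}$, and $S=\S^1\times\{0\}$; here $r^2\alpha_{std}=\tfrac12\sum(x_j\,dy_j-y_j\,dx_j)$ is smooth across $r=0$, so $\eta$ is a genuine contact form and $\U(S)\setminus S$ is the model $T$. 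On the exceptional side I would use $(B^2\times\S^{2n-1},\ker\lambda)$ with $\lambda=r^2\,d\theta+\alpha_{std}$ and $(r,\theta)$ polar on $B^2$; since $r^2\,d\theta=x\,dy-y\,dx$ the form $\lambda$ extends smoothly across the central disc and a Cartesian computation shows it remains contact at $r=0$. The divisor $E:=\{0\}\times\S^{2n-1}$ is cut out by $r=0$, the restriction of $\lambda$ to $TE$ kills the $d\theta$ term and leaves $\alpha_{std}$, so $(E,\ker\lambda|_E)\cong(\S^{2n-1},\xi_{std})$ is the standard contact sphere; it has codimension two and the manifestly trivial normal bundle $E\times\R^2$.

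The heart of the argument is a contactomorphism between the boundary collars of the two models, each a copy of $T$. Here I would use the twist $\phi_l$: by the computation recorded before the statement, $\phi_l^*\eta=(-r^2)\big[(l-r^{-2})\,d\theta+\alpha_{std}\big]$, so $\phi_l$ identifies $\ker\eta$ with $\ker\big[(l-r^{-2})\,d\theta+\alpha_{std}\big]$. It then remains to connect the latter to $\ker\lambda=\ker[\,r^2\,d\theta+\alpha_{std}\,]$. Both are of the shape $\ker[f(r)\,d\theta+\alpha_{std}]$, and a one-line computation of $\beta\wedge(d\beta)^n$ shows such a form is contact exactly when $f'\neq0$; as $l-r^{-2}$ and $r^2$ are both strictly increasing this holds throughout, and a radial reparametrisation $\psi\colon(\theta,r,\sigma)\mapsto(\theta,\rho(r),\sigma)$ with $\rho(r)^2=l-r^{-2}$ carries one structure onto the other on the subcollar $r>l^{-1/2}$. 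Composing $\phi_l$ with $\psi$ gives the required contactomorphism $\Phi$ of collars.

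I would then define $\overline{M}$ by excising $\U(S)$ from $M$ and attaching $B^2\times\S^{2n-1}$ along $\Phi$, identifying the two boundary collars. Because $\Phi$ is a contactomorphism, $\xi$ and $\ker\lambda$ patch to a global contact structure $\overline{\xi}$; smoothness across the seam is built into the collar identification, and smoothness together with the contact condition across $E$ were arranged in the first step. By construction $M\setminus\U(S)$ sits untouched inside $\overline{M}\setminus E$ and the glued collar is contactomorphic to the original, giving $(M\setminus\U(S),\xi)\cong(\overline{M}\setminus E,\overline{\xi})$; the asserted properties of $E$ are read off from the second model. Topologically the operation deletes $\S^1\times B^{2n}$ and glues in $B^2\times\S^{2n-1}$ along $\S^1\times\S^{2n-1}$, exactly the stated surgery.

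The main obstacle is the gluing, and in particular the role of the framing $l$. The two models collapse opposite factors at their cores --- $\eta$ collapses the sphere $\S^{2n-1}$ to recover the loop, while $\lambda$ collapses the circle $\S^1$ to recover the divisor --- so the collar identification must simultaneously be a contactomorphism and implement this interchange of collapsing directions with a framing that makes the normal bundle of $E$ trivial and lets $\overline{\xi}$ extend over $E$. The twist $\phi_l$ is precisely what supplies this framing, and one must check that $l\in\Z$ can be chosen, after possibly shrinking $\U(S)$, so that $\rho(r)=\sqrt{l-r^{-2}}$ is a genuine diffeomorphism on a full collar; confirming that $\Phi$ then extends to a smooth contactomorphism and that $\overline{\xi}$ is smooth across both the seam and $E$ is where the real work lies.
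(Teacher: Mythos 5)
Your construction is correct and rests on the same skeleton as the paper's proof --- identify $\U(S)$ with the model $(\S^1\times B^{2n}(\varepsilon),\ker\eta)$, twist the collar by $\phi_l$ so that $\ker\eta$ becomes $\ker[(l-r^{-2})\,d\theta+\alpha_{std}]$, and match this with $\ker\lambda$ on the punctured disc bundle --- but it handles the two delicate steps differently. First, the radius problem: the paper fixes $l=1$, which forces the collar to reach past $r=1$, and achieves this by enlarging the $\varepsilon$--neighbourhood to radius $2$ via the squeezing contactomorphism of Lemma \ref{lem:EKP}; you instead keep the given $\varepsilon$--neighbourhood and let the framing absorb the problem, since the annulus $l^{-1/2}<r<\varepsilon$ is nonempty as soon as $l>\varepsilon^{-2}$. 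This is precisely the content of the remark following the paper's proof (the commutative square with $k=l-1$), so the two routes are equivalent; note however that your phrase ``after possibly shrinking $\U(S)$'' points the wrong way --- shrinking $\U(S)$ only tightens the constraint, and the correct move is to take $l$ large for the given $\varepsilon$. Second, the matching: the paper keeps the radial coordinate fixed and interpolates the profile function between $r^2$ (declared near the exceptional core) and $1-r^{-2}$ (in the twisted outer annulus) by a strictly increasing function, using that $\ker[f(r)\,d\theta+\alpha_{std}]$ is contact exactly when $f'\neq 0$; you instead reparametrise the radius by $\rho(r)=\sqrt{l-r^{-2}}$, which carries $\ker\lambda$ onto the twisted structure on the nose, with no interpolation. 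Both work: your version trades the convexity of the space of increasing profiles for the check that $\rho$ is a diffeomorphism of the open annulus onto the punctured disc bundle, which holds since $\rho'=r^{-3}/\rho>0$ there (the blow--up of $\rho'$ at the inner edge is harmless, as that edge lies outside the gluing region and the smooth structure near $E$ comes from the $B^2\times\S^{2n-1}$ chart). The remaining points --- smoothness and the contact condition of $\lambda$ across $E$, the identification of $(E,\ker\lambda|_E)$ with the standard sphere, and locality of the modification --- are as in the paper.
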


\begin{proof} By Gray's stability, we may assume that a tubular neighbourhood of the embedded loop is contactomorphic to $\S^1 \times B^{2n}(\varepsilon)$ with the contact form $\eta$ as in (\ref{eq:std}), for some small radius $r\leq \varepsilon$. We enlarge this tubular $\varepsilon$--neighbourhood using the squeezing technique from ~\cite{EKP} to obtain a radius 2 neighbourhood. More precisely, we need the following auxiliary lemma:
\begin{lemma} \label{lem:EKP} {\em (}Proposition 1.24 in \cite{EKP}{\em )} \label{lem:shrink}
Let $k>0$ be a positive integer and $R_0>0$ a radius. Then the following map is a contactomorphism
\begin{eqnarray*}
\psi_k: \S^1 \times B^{2n}(R_0) & \longrightarrow & \S^1 \times B^{2n}\left( \frac{R_0}{\sqrt{1+kR_0^2}}\right) \\
(\theta, r, w_1, \ldots, w_n) & \longrightarrow & \left(\theta,  \frac{r}{\sqrt{1+kr^2}}, e^{2\pi ik\theta}w_1, \ldots, e^{2\pi ik\theta}w_n\right),
\end{eqnarray*}
and it restricts to the identity at $\S^1 \times \{ 0 \}$.
\end{lemma}

\noindent Consider  $R_0=2$ in the lemma above, then we need $k$ large enough to satisfy
$$\frac{2}{\sqrt{1+4k}}<\varepsilon.$$

\noindent We may therefore  assume that the tubular neighbourhood for which the standard equation (\ref{eq:std}) holds for $\eta$ has radius $r=2$.
In the annulus corresponding to radius $(3/2,2)$ use $\phi_1$ to induce the contact structure given by $\ker \overline{\lambda}$. Declare $\ker \lambda$ to define the contact structure in the radius area $[0,1/2]$. It is left to find a strictly increasing function interpolating between $r^2$ and $1-r^{-2}$ in the middle region. This can be done, see Figure \ref{fig:h}.
\end{proof}
\begin{center}
\begin{figure}[ht]
\includegraphics[scale=0.3]{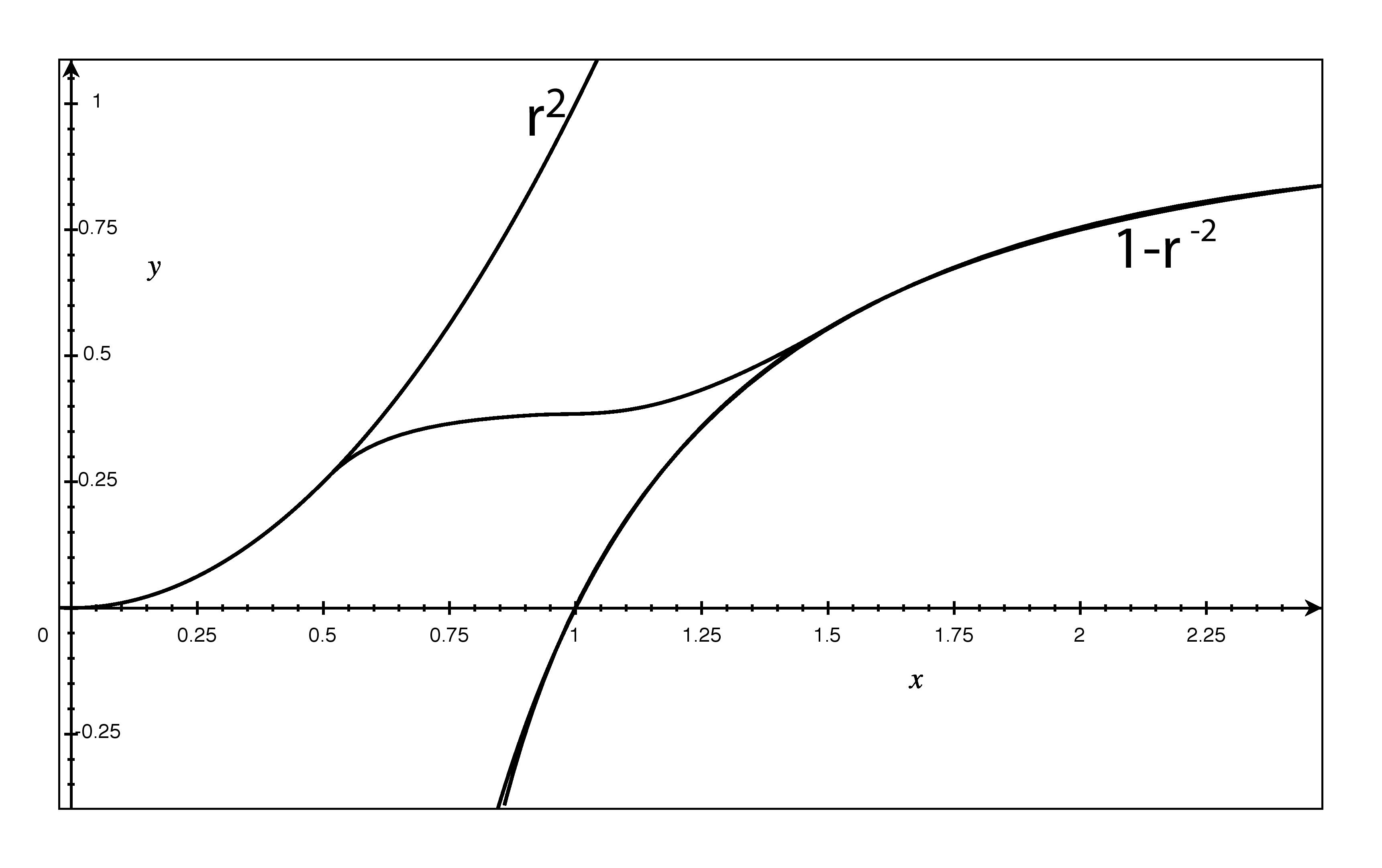}
\caption{Interpolation matching $\lambda$ and $\overline{\lambda}$.}\label{fig:h}
\end{figure}
\end{center}
\begin{remark}

\noindent The process described in the proof can be modified to include the radius squeezing in the gluing map. It suffices to use $\phi_l$ as gluing map instead of $\phi_1$ in the domain. Indeed, denote $T_\rho=\S^1\times(0,\rho)\times\S^{2n-1}$ and consider the contact structures
$$\xi_0=\ker\{d\theta-r^2\alpha_{std}\},\quad\xi_l=\ker\{(l-r^{-2})d\theta+\alpha_{std}\}.$$
Define the map
$$\varphi: T_2\longmapsto T_{\varepsilon(k)},\quad (\theta,r,z)\longmapsto\left(\theta,\frac{r}{\sqrt{1+kr^2}},z\right),$$
where $\varepsilon(k)$ is the obvious radius in the image. Then the following diagram is commutative in the contact category :
$$\xymatrix{
(T_2,\xi_1)\ar@{->}[d]^{\varphi}\ar@{->}[r]^{\phi_1}  & (T_2,\xi_0)\ar@{->}[d]^{\psi_k}\\
(T_{\varepsilon(k)},\xi_l)\ar@{->}[r]^{\phi_l} &  (T_{\varepsilon(k)},\xi_0)
}$$
where Lemma \ref{lem:EKP} is performed with parameter $k=l-1$.

\end{remark}
Note that the contactomorphism type of the exceptional divisor is that of the standard sphere, the parameter in the construction allows us to discretely vary the radius of the tubular neighbourhood we are collapsing. 
\begin{lemma}
The maps $\phi_l$ and $\phi_k$ are smoothly isotopic if and only if $(k-l)n$ is even.
\end{lemma}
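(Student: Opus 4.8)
The plan is to reduce everything to the claim that $\phi_m$, with $m:=k-l$, is smoothly isotopic to the identity if and only if $nm$ is even, and to recognize $\phi_m$ as the self--diffeomorphism of $T$ determined by a loop in the rotation group. Since $\phi_k=\phi_{k-l}\circ\phi_l$ and composition with the fixed diffeomorphism $\phi_l$ carries isotopies to isotopies, $\phi_k$ and $\phi_l$ are isotopic exactly when $\phi_m$ is isotopic to $\operatorname{Id}_T$. Now $\phi_m(\theta,r,z)=(\theta,r,\gamma_m(\theta)z)$ is fibrewise linear over the base $\S^1\x(0,1)$, where $\gamma_m\colon\S^1\to SO(2n)$ is the loop $\gamma_m(\theta)=e^{2\pi i m\theta}\cdot\operatorname{Id}_{\C^n}$. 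Assuming $n\ge 2$, so that $\S^{2n-1}$ has dimension at least $3$ and $\pi_1(SO(2n))\cong\Z_2$, I would first compute $[\gamma_m]$: viewing $\gamma_m$ as a loop in $U(n)\subset SO(2n)$, its winding number measured by $\det$ is $nm$, and the induced map $\pi_1(U(n))=\Z\to\pi_1(SO(2n))=\Z_2$ is reduction modulo $2$, so $[\gamma_m]=nm \bmod 2$. (For $n=1$ the maps $\phi_l$ are powers of a Dehn twist on $T^2$ and are pairwise non--isotopic, an exceptional case to be treated separately.)

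For the ``if'' direction, suppose $nm$ is even, so $[\gamma_m]=0$ and there is a smooth based homotopy $\Gamma\colon[0,1]\x\S^1\to SO(2n)$ with $\Gamma(0,\cdot)=\gamma_m$ and $\Gamma(1,\cdot)\equiv\operatorname{Id}$. Then $\Phi_t(\theta,r,z):=(\theta,r,\Gamma(t,\theta)z)$ is a smooth path of fibrewise--linear diffeomorphisms from $\phi_m$ to $\operatorname{Id}_T$, and $\{\Phi_t\circ\phi_l\}$ is the desired isotopy from $\phi_k$ to $\phi_l$. This direction is routine; the content lies in the converse.

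The hard part is the converse, for which I need an isotopy invariant that detects $[\gamma_m]\in\pi_1(SO(2n))$. The $(0,1)$--factor being an inessential product direction, I would replace $T$ by $N:=\S^1\x\S^{2n-1}$ and pass to the mapping torus $X_m$ of $\phi_m$. Because $\phi_m$ commutes with the projection $N\to\S^1$, the closed manifold $X_m$ is the total space of an $\S^{2n-1}$--bundle $\rho\colon X_m\to T^2$, namely the sphere bundle $S(V_m)$ of the rank--$2n$ real bundle $V_m\to T^2$ whose clutching data is trivial around the first circle and $\gamma_m$ around the second; collapsing the $1$--skeleton to $\S^2$ shows that $V_m$ is classified by $[\gamma_m]$, so that $\langle w_2(V_m),[T^2]\rangle=nm \bmod 2$. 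From $T^{\mathrm{vert}}X_m\oplus\underline{\R}\cong\rho^*V_m$ and the triviality of $TT^2$ one gets $w_2(X_m)=\rho^*w_2(V_m)$, and since the fibre is $(2n-2)$--connected the map $\rho^*\colon H^2(T^2;\Z_2)\to H^2(X_m;\Z_2)$ is injective for $n\ge 2$; thus $w_2(X_m)\neq 0$ precisely when $nm$ is odd. Finally, isotopic diffeomorphisms have diffeomorphic mapping tori, so if $\phi_m$ were isotopic to $\operatorname{Id}_T$ then $X_m\cong T^2\x\S^{2n-1}$, which is spin, forcing $w_2(X_m)=0$ and hence $nm$ even. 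The main obstacle is precisely this construction: detecting $[\gamma_m]$ only through fibre--preserving isotopies is insufficient, and it is the mapping torus together with its second Stiefel--Whitney class that upgrades the homotopy--theoretic computation of $[\gamma_m]$ into a statement about arbitrary smooth isotopies.
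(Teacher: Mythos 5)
Your proof is correct, and its first half---identifying $\phi_m$ ($m=k-l$) with the loop $\gamma_m$ in $SO(2n)$, computing $[\gamma_m]=nm\bmod 2$ under $\pi_1(U(n))\to\pi_1(SO(2n))\cong\Z_2$, and obtaining the ``if'' direction by suspending a nullhomotopy of $\gamma_m$ to an isotopy---is exactly the paper's argument. For the converse the invariant you use is the same as the paper's (non--spin--ness of a closed $\S^{2n-1}$--bundle built from the gluing map, detected by $w_2$), but the detecting manifold is different: the paper glues two copies of $B^2\times\S^{2n-1}$ along their boundary via $\phi_m$ to obtain a sphere bundle over $\S^2$ and computes $w_2$ by restricting the tangent bundle to a section, whereas you take the mapping torus, a sphere bundle $S(V_m)\stackrel{\rho}{\to}T^2$, and compute $w_2$ directly from the rank--$2n$ bundle $V_m$ via $T^{\mathrm{vert}}\oplus\underline{\R}\cong\rho^*V_m$ and the injectivity of $\rho^*$ on $H^2$. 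Your route avoids needing a section (which is where the paper implicitly uses $n\ge 2$) and reduces the characteristic--class computation to the degree of the line bundle $\det V_m$ on $T^2$; the paper's clutched double has the advantage of being precisely the manifold produced by the surgery, so it shows directly that the resulting blown--up manifolds can have different smooth types. You are also more explicit on two points the paper glosses over: the reduction of ``$\phi_k$ isotopic to $\phi_l$'' to ``$\phi_{k-l}$ isotopic to the identity,'' and the exclusion of $n=1$, where $\pi_1(SO(2))\cong\Z$, the $\phi_l$ are powers of a Dehn twist, and the statement as written fails.
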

\begin{proof}
Let $t\in\S^1$ be a circle coordinate. Consider the morphism
$$\Psi: \pi_1(SO(2n)) \longrightarrow\pi_0(\Diff(\S^1 \times \S^{2n-1})),\quad \Psi(\gamma_t)(\theta,z)= (\theta, \gamma_{\theta}(z)).$$
If $\gamma_k$ denotes $k$--times the standard circle action on $\S^{2n-1}\subset\C^n$ induced by $\C^*$, then it is clear that $\phi_k$ is realized as $\Psi(\gamma_k)$. Since $\pi_1(SO(2n)) \simeq \Z_2$ and $\gamma_k\simeq k \cdot n$ under this identification, $\gamma_k= \gamma_l$ if and only if $(k-l)n$ is even.\\

\noindent It is left to prove that $\phi_0$ and $\phi_1$ are not isotopic, for $n$ even. Construct two manifolds $X_0$ and $X_1$ by gluing two copies of the manifold $B^2 \times \S^{2n-1}$ respectively using $\phi_0$ and $\phi_1$ along the boundary. These manifolds are not diffeomorphic. A sphere is a spin manifold and the product formula for characteristic classes implies that so is $X_0= \S^2 \times \S^{2n-1}$.\\

\noindent The manifold $X_1$ is not spin. This can be seen by using any section $s$ of the twisted bundle $X_1\longrightarrow\S^2$, such $s$ exists because $n\geq 2$. Denote by $\nu(s(\S^2))$ the normal bundle to the section and let $E_1 \longrightarrow \S^2$ be the complex bundle over $\S^2$ such that $\S(E_1)=X_1$. Then $s^*(\nu(s(\S^2)\oplus \R)=E_1$. Note that $w(E_1)=1$ if $n$ is odd and
$$w_2([s(\S^2)])=w_2(TX_{|s(\S^2)})=w_2(\nu(s(\S^2))=w_2(s^*(\nu(s(\S^2)\oplus \R))= w(E_1).$$
Hence $\phi_0$ and $\phi_1$ are not isotopic.
\end{proof}
\noindent In particular, the smooth type of the contact blow--up manifold will depend on the parity of the positive integer fixed for the construction. As for the contact type, it follows from Theorem 1.2 in \cite{EKP} that the maps $\phi_k$ and $\phi_l$ are not contact compactly supported isotopic if $k\neq l$. This does not imply that the contact structures are different, but at least there is no local contactomorphism relating the two contact structures.

\section{Gromov's approach} \label{sec:Gromov}
In this section we develop the contact blow--up along a Boothby--Wang submanifold, as suggested in  ~\cite{Gr}.The
existence of a minimum radius for the tubular neighbourhood of the submanifold along which we will perform the blow--up will play an important role. This feature will be revisited in the definition provided in  Section \ref{sec:quotient}.\\

\noindent Let us review the definition of the symplectic blow--up, see \cite{MS} for more details.

\subsection{Symplectic blow--up}
Let $(M, \omega)$ be a symplectic manifold and $S$ a symplectic submanifold of codimension $k\geq4$. 
Consider the symplectic normal bundle $(\nu_S,\pi)$ of $S$ in $M$ and fix a compatible almost complex structure. The choice of a compatible almost complex structure for a symplectic form induces a metric and the equality
$$U(k)=O(2k)\cap\ Sp(2k,\R)$$
implies that the structure group of $\nu_S$ can be considered to be $U(k)$. Thus $\nu_S$ is an associated vector bundle of a $U(k)$--principal bundle $P \longrightarrow S$.\\


\noindent The symplectic blow--up of $M$ along $S$ is obtained by the fiberwise symplectic blow--up of $\nu_S$. Hence we require Definition \ref{def:lnbu} for the case of symplectic vector spaces. Let $(\R^{2k},\omega_0)$ be the standard symplectic vector space and $\omega_{FS}$ the standard Fubini--Study form on complex projective space. We will use the following

\begin{definition}\label{def:sybu}
A symplectic blow--up of $(\R^{2k},\omega_0)$ at the origin with radius $\delta$ is a symplectic manifold $(\widetilde{\R}^{2k}_{\delta}, \widetilde{\omega}_{\delta})$ such that:
\begin{itemize}
\item[1.] $\widetilde{\R}^{2k}_{\delta}\stackrel{\pi}{\longrightarrow} \R^{2k}$ is a topological blow--up of $\C^{k}$ at the origin. The symplectic form induced on the exceptional divisor $E\cong\CP^{k-1}$ is $\delta^{\frac{1}{2k}} \cdot \omega_{FS}$.
\item[2.] For any $\varepsilon>0$, there exists a symplectomorphism
$$\widetilde{\R}^{2k}_{\delta} \setminus \pi^{-1}(B(\delta+\varepsilon))\cong\R^{2k} \setminus B(\delta+\varepsilon)$$
\item[3.] The unitary group $U(k)$ acts Hamiltonially in  $(\widetilde{\R}^{2k}_{\delta}, \widetilde{\omega}_{\delta})$.\\
\end{itemize}
\end{definition}
\noindent The symplectic blow--up of $(\R^{2k},\omega_0)$ at the origin exist for each $\delta$.
\begin{remark}
Note that the definition depends on $\delta$, this parameter does not appear in Definition \ref{def:lnbu} since any linear homothety at the origin is a complex isomorphism.
\end{remark}

\noindent Let us describe the non--linear symplectic blow--up of $M$ along $S$. Property 3 in the above definition allows us to associate to $P$ a bundle $(\widetilde{\nu}_{S, \delta},\widetilde{\pi})$ over $S$ with fiber $\widetilde{\R}^{2k}_{\delta}$. Let $\beta$ be a connection in $P$, there are induced coupling forms $\alpha$ and $\widetilde{\alpha}_{\delta}$, in $\nu_S$ and $\tilde{\nu}_{S, \delta}$ respectively, restricting to the symplectic form on each fiber and coinciding away from the radius $\delta +\varepsilon$, see Thm. 6.17 in \cite{MS}. Define the forms
\begin{eqnarray*}
\omega_{\nu} & = & \alpha + \pi^* \omega_S \label{eq:la1} \\
\widetilde{\omega}_{\nu} & = & \widetilde{\alpha}_{\delta} + \widetilde{\pi}^* \omega_S \label{eq:la2}
\end{eqnarray*}
on the bundles $\nu_S$ and $\widetilde{\nu}_{S, \delta}$. These are symplectic forms close to the zero section and to the exceptional divisor respectively.\\

\noindent These forms also coincide away from a neighbourhood of $S$ of radius $\delta+\varepsilon$. Let $U_{\delta_0}= P \times_{U(k)} B(\delta_0)$ be a neighbourhood of the zero section of the symplectic normal bundle. By the symplectic neighbourhood theorem there is a neighbourhood $\U(S)$ of the symplectic submanifold $S$ and a symplectomorphism $\Psi: \U(S)\cong\ U_{\delta_0}$. Thus any fiberwise symplectic blow--up on $\nu_S$ with radius $0 \leq \delta+\varepsilon < \delta_0$ can be glued back to the initial manifold $M$ using the symplectomorphism $\Psi$. The resulting manifold is the symplectic blow--up of $M$ along $S$ with radius $\delta$.\\



\noindent Observe that the radius of the tubular neighbourhood of $S$ cannot  be estimated a priori. Therefore the symplectic volume of the exceptional divisor cannot be assumed to be arbitrarily large. This will be an obstruction to develop the Gromov's approach in the contact category.\\

\noindent{\bf Example:} Let $V$ be a rank--$2k$ symplectic vector bundle over a symplectic manifold $(W,\omega)$. Then the total space is symplectic as well. Thus, we are able to blow--up the symplectic manifold $V$ along its zero section $W$. In case the symplectic form $\omega$ is of integer class, the symplectic form in the resulting blown--up manifold will be of an integer class if the blow--up radius is $m^{\frac{1}{2k}}$, $m\in\N^*$. We call this a radius $m$ blow--up. \\

\subsection{Definition of Contact Blow--up}

We now define the contact blow--up in terms of the symplectic blow--up. This is the second notion listed in Section \ref{sec:intro}.\\

\noindent Let $(M,\xi)$ be a contact manifold and $(S,\xi_S)$ a contact submanifold. We assume:
\begin{itemize}
\item[H1.] The contact submanifold $S$ is contactomorphic to a Boothy--Wang manifold $\mathbb{S}(W,\omega)$.\\
\item[H2.] Let $\pi:\mathbb{S}(W)\longrightarrow W$ be the circle bundle projection. There exists a symplectic bundle $V$ over $W$ such that, as symplectic bundles $\nu_M(S)\cong V$.
\end{itemize}



\noindent The total space of $V$ carries a symplectic form $\overline{\omega}$ in the same cohomology class of $[\omega]$, under the natural identification of $H^2(V,\mathbb{R})$ with $H^2(W, \mathbb{R})$. As previously explained, there exists a symplectic manifold $(\widetilde{V},\overline{\omega}_W)$ obtained by blowing up $V$ along its zero section $W$. Suppose that the parameter multiplying the class of the exceptional divisor $E$ in the symplectic blow--up is a positive integer, i.e. the symplectic form in $\widetilde{V}$ is integral.\\

\noindent The construction of the contact blow--up is based on the following diagram:
$$\xymatrix{
&\nu_M(S)\cong\pi^*(V) \ar@{->}[d]
&\mathbb{S}(V) \ar@{->}[d]
&\mathbb{S}(\widetilde{V})\supset\mathbb{S}(E)=\mathbb{S}(\widetilde{V})|_E \ar@{->}[d]
\\
&(S,\xi_S)\cong\mathbb{S}(W) \ar@{->}_{\pi}[d]
&(V,\overline{\omega}) \ar@{->}[ld]
&(\widetilde{V},{\overline{\omega}_W})\supset E \ar@{->}[l]
\\
&(W,\omega)}$$
\begin{center}
\small{Diagram 1. Contact Blow--up Setup} 
\end{center}
\noindent Each map is a bundle projection. It is essential to understand the relation between the contact manifolds $\S(W),\S(V)$ and $\S(E)$. This is the content of the following:

\begin{lemma}
In the hypotheses above, $\mathbb{S}(W)$ is a contact submanifold of $\mathbb{S}(V)$. There are contactomorphic neighbourhoods $\U(\mathbb{S}(W))$ and $\U(S)$
in $\mathbb{S}(V)$ and $M$ respectively. 
\end{lemma}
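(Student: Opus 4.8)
The plan is to treat the two claims separately: first exhibit $\mathbb{S}(W)$ as a contact submanifold of $\mathbb{S}(V)$ and identify its symplectic normal bundle, and then deduce the neighbourhood statement from the standard neighbourhood theorem for contact submanifolds.

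First I would fix a symplectic form $\overline{\o}$ on the total space of $V$ lying in the class $\pi_V^*[\o]$, chosen so that its restriction to the zero section equals $\o$ and so that near $W$ it agrees fibrewise with the symplectic vector bundle structure $\s$ of $V$; the minimal coupling form associated to a connection does the job. Since $V$ deformation retracts onto its zero section, the restriction map $H^1(V)\to H^1(W)$ is an isomorphism, so Lemma~\ref{lem:exten_bw} allows me to extend a Boothby--Wang connection $A_W$ on $L_\o|_W$ (curvature $-i\o$) to a connection $A$ on $L_{\overline{\o}}=\pi_V^*L_\o$ with curvature $-i\overline{\o}$ and $A|_W=A_W$. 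The restriction of the resulting contact form $\a_V$ to the circle bundle over the zero section is exactly the Boothby--Wang form defining $\mathbb{S}(W)$; hence $\mathbb{S}(W)$ embeds in $\mathbb{S}(V)$ as the circle bundle over $W$, and $\a_V|_{\mathbb{S}(W)}$ is contact, so $\mathbb{S}(W)$ is a contact submanifold.

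Next I would compute its conformal symplectic normal bundle. The contact distribution $\xi_V=\ker\a_V$ is horizontal and carries the conformal symplectic form $d\a_V|_{\xi_V}=\pi_{\mathbb{S}(V)}^*\overline{\o}$. Along $\mathbb{S}(W)$ one has $TV|_W=TW\oplus V$, so $\xi_V|_{\mathbb{S}(W)}\cong\pi^*(TW\oplus V)$ with form $\pi^*(\o\oplus\s)$; the contact distribution of $\mathbb{S}(W)$ is the summand $\pi^*TW$, and its $d\a_V$--orthogonal complement, the conformal symplectic normal bundle $\mathrm{CSN}_{\mathbb{S}(V)}(\mathbb{S}(W))$, is $(\pi^*V,\pi^*\s)$. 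Under the canonical isomorphism $\mathrm{CSN}\cong\nu$ this identifies the symplectic normal bundle of $\mathbb{S}(W)$ in $\mathbb{S}(V)$ with $(\pi^*V,\s)$.

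Finally I would invoke the neighbourhood theorem. Fix the contactomorphism $\Phi\colon(S,\xi_S)\to\mathbb{S}(W)$ furnished by H1. In $M$ the symplectic normal bundle of the contact submanifold $S$ is $(\nu_M(S),d\a|_\nu)$, which by H2 is isomorphic to $(\pi^*V,\s)$, exactly the bundle computed above for $\mathbb{S}(W)$ in $\mathbb{S}(V)$. Thus $\Phi$ is covered by an isomorphism of conformal symplectic normal bundles, and the standard Moser--type neighbourhood theorem for contact submanifolds extends $\Phi$ to a contactomorphism $\U(S)\cong\U(\mathbb{S}(W))$. The delicate point, and the main obstacle, is the compatibility of the two symplectic structures on the normal bundle: one must check that the symplectic vector bundle structure which H2 places on $\nu_M(S)$ really coincides, up to conformal factor, with the conformal symplectic normal bundle structure induced by $d\a$ in $M$, and symmetrically that the choice of $\overline{\o}$ reproduces $\s$ fibrewise near $W$ in $\mathbb{S}(V)$; once these identifications are arranged so that a single bundle isomorphism covers $\Phi$, the neighbourhood theorem applies as a black box.
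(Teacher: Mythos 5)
Your proposal is correct and follows essentially the same route as the paper: embed $W$ as the zero section of $V$ so that $\mathbb{S}(W)\subset\mathbb{S}(V)$ is a contact submanifold, match the (conformal) symplectic normal bundles via H2 and the isomorphism $\nu_M(S)\cong\pi^*V$, and conclude with the contact neighbourhood theorem. The paper's proof is just a terser version of yours; your extra steps (extending the Boothby--Wang connection via Lemma~\ref{lem:exten_bw} and computing the conformal symplectic normal bundle explicitly) fill in details the paper leaves implicit, including the compatibility point you rightly flag at the end.
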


\begin{proof}
The choice of symplectic form on $V$ implies that there exists a symplectic embedding of $W$ in $V$ and therefore $\mathbb{S}(W)$ is contained in $\mathbb{S}(V)$ as a contact submanifold. The tubular neighbourhood theorem states that the normal bundle $\nu_M(S)$ is diffeomorphic to a small neighbourhood of $S$ in $M$, but $\nu_M(S)\cong\pi^*(V)$ so the same situation applies to $\mathbb{S}(W)$ in $\mathbb{S}(V)$. The statements now follow from the contact neighbourhood theorem.\end{proof}


\noindent In consequence, $\mathbb{S}(W)\subset\mathbb{S}(V)$ provides a local model. Thus we only need to perform the blow--up of $V$ along $W$ and study whether the Boothby--Wang structures associated to them allow us to glue back the resulting blown--up model to $M$. This is the content of the following:\\

\begin{proposition} \label{propo:bundle_blow}
Let $S=\S(W)$ be a Boothby--Wang contact submanifold of $\S(V)$. Suppose we symplectically blow--up $W \subset V$ by collapsing a radius $1$ neighbourhood. Then, there is a choice of contact form for $\mathbb{S}(\widetilde{V})$ such that $\mathbb{S}(E)$ is a contact submanifold of $\mathbb{S}(\widetilde{V})$ and the complement of an arbitrary small neighbourhood of $\mathbb{S}(E)$ in $\mathbb{S}(\widetilde{V})$ is contactomorphic to the complement of some  neighbourhood of $\mathbb{S}(W)$ in $\mathbb{S}(V)$.
\end{proposition}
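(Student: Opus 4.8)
The plan is to reduce everything to lifting, to the Boothby--Wang circle bundles, the symplectomorphism between $\widetilde{V}$ and $V$ that the symplectic blow--up provides away from the collapsed region. Recall from property 2 of Definition \ref{def:sybu} that for every $\varepsilon>0$ the blow--up $(\widetilde{V},\overline{\omega}_W)$ is \emph{literally} $(V,\overline{\omega})$ outside the radius $1+\varepsilon$ tube around $W$: there is a symplectomorphism
$$\Phi:\widetilde{V}\setminus\widetilde{\pi}^{-1}(B(1+\varepsilon))\longrightarrow V\setminus B(1+\varepsilon),\qquad \Phi^*\overline{\omega}=\overline{\omega}_W,$$
which is the identity under the gluing defining the blow--up. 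Write $\widetilde{U}$ and $U$ for these two complementary regions. The first assertion is then immediate: $E\subset\widetilde{V}$ is a symplectic submanifold with $\overline{\omega}_W|_E=\omega_E$ symplectic, so for \emph{any} connection on $L_{\overline{\omega}_W}$ of curvature $-i\overline{\omega}_W$ its restriction over $E$ is a connection of curvature $-i\omega_E$; hence $\S(\widetilde{V})|_E=\S(E)$ carries the Boothby--Wang contact structure of $(E,\omega_E)$ and, the fibre circle being transverse to it, sits in $\S(\widetilde{V})$ as a contact submanifold.

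For the statement on complements I would build the contact form on $\S(\widetilde{V})$ so that it agrees with the one on $\S(V)$ over $\widetilde{U}\cong U$. Fix a Boothby--Wang connection $A$ on $L_{\overline{\omega}}\to V$ of curvature $-i\overline{\omega}$. Since $(\widetilde{V},\overline{\omega}_W)$ coincides with $(V,\overline{\omega})$ on $\widetilde{U}$, one may take $L_{\overline{\omega}_W}$ to restrict to $L_{\overline{\omega}}$ there and set $A_{\widetilde{U}}:=A|_{\widetilde{U}}$, a connection of curvature $-i\overline{\omega}_W$ on $L_{\overline{\omega}_W}|_{\widetilde{U}}$. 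The heart of the argument is to extend $A_{\widetilde{U}}$ to a connection $\widetilde{A}$ on all of $L_{\overline{\omega}_W}$ still of curvature $-i\overline{\omega}_W$; this is exactly Lemma \ref{lem:exten_bw} applied to the closed codimension--$0$ submanifold $\overline{\widetilde{U}}\subset\widetilde{V}$ (with boundary), provided the restriction $H^1(\widetilde{V})\to H^1(\overline{\widetilde{U}})$ is onto. Granting this, the Boothby--Wang contact structure $\overline{\xi}$ determined by $\widetilde{A}$ agrees over $\widetilde{U}$ with the pullback by $\Phi$ of the contact structure of $\S(V)$ over $U$; thus $\Phi$ lifts to a bundle map that is a contactomorphism between $\S(\widetilde{V})|_{\widetilde{U}}$, the complement of a neighbourhood of $\S(E)$, and $\S(V)|_{U}$, the complement of a neighbourhood of $\S(W)$. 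Letting $\varepsilon\to0$ matches the \emph{arbitrarily small} neighbourhoods of $\S(E)$ with \emph{some} (radius $>1$) neighbourhood of $\S(W)$, as stated, while Remark \ref{rek:Uniqueness_BW} guarantees that the result is independent of the auxiliary choices up to contactomorphism.

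It remains to verify the cohomological hypothesis, which I expect to be the main obstacle. The region $\widetilde{U}$ deformation retracts onto the unit sphere bundle $SV\to W$ of the vector bundle $V$, while $\widetilde{V}$ deformation retracts onto $E=\P(V)$; up to homotopy the inclusion $\widetilde{U}\hookrightarrow\widetilde{V}$ is the fibrewise Hopf map $p:SV\to\P(V)$, a circle bundle whose Euler class $e$ restricts on each fibre $\CP^{k-1}$ to the hyperplane generator and is therefore non--torsion (here $V$ has complex rank $\geq2$, the only case in which the blow--up is non--trivial). The Gysin sequence
$$0\longrightarrow H^1(\P(V))\stackrel{p^*}{\longrightarrow}H^1(SV)\longrightarrow H^0(\P(V))\stackrel{\cup\,e}{\longrightarrow}H^2(\P(V))$$
then has $\cup\,e$ injective, so $p^*$ is surjective and $H^1(\widetilde{V})\to H^1(\widetilde{U})$ is onto, as required. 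The delicate points to watch are that $L_{\overline{\omega}_W}$ really can be chosen to restrict to $L_{\overline{\omega}}$ over $\widetilde{U}$ (which holds because the two forms literally coincide there and $\overline{\omega}_W$ is integral by the radius $1$ choice), and that the extension of Lemma \ref{lem:exten_bw} is carried out without disturbing the curvature, so that $\overline{\xi}=\ker\widetilde{A}$ stays a genuine contact form across the collapsed region.
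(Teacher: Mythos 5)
Your proposal is correct and follows essentially the same route as the paper: restrict the Boothby--Wang connection to the region where the blow--down map is a symplectomorphism, extend it with prescribed curvature via Lemma \ref{lem:exten_bw}, and reduce everything to the surjectivity of $H^1(\widetilde{V})\to H^1(\widetilde{U})$. The only divergence is in verifying that surjectivity: you identify the inclusion up to homotopy with the tautological circle bundle $SV\to\P(V)$ and run the Gysin sequence, whereas the paper simply checks that both spaces have fundamental group $\pi_1(W)$ via the long exact homotopy sequences of the $\CP^{r-1}$-- and sphere--bundles over $W$; both arguments are valid, and yours has the small merit of making explicit that the isomorphism is induced by the inclusion.
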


\noindent For the sake of a clearer exposition the proof is explained at the end of this subsection.\\

\noindent Suppose we can choose a tubular neighbourhood $\U(\mathbb{S}(W))\subset\mathbb{S}(V)$ with radius larger than $1$ which is contactomorphic to a tubular neighbourhood $\U(S)\subset M$. Then we have the following

\begin{definition}
The contact blow--up of $(M,\xi)$ along $(S,\xi_S)$ is the contact manifold $(M',\xi')$ obtained by removing the neighbourhood $\U(S)$ and gluing along its boundary a small neighbourhood of $\mathbb{S}(E)$ in $\mathbb{S}(\widetilde{V})$.
\end{definition}


\noindent The contact manifold $(M',\xi')$ is contactomorphic to $M$ away from small neighbourhoods of $\mathbb{S}(E)$ and $S$ respectively. The \emph{exceptional divisor} of the contact blow--up is defined to be $\mathbb{S}(E)$, where $E$ is the exceptional divisor of the symplectic blow--up over which it is locally modelled. Observe that for the definition to work we need $S$ to have a tubular neighbourhood of radius at least $1$ inside $M$. \\

\noindent{\bf Example}: 1. The most simple example of contact blow--up is the case of a transverse
loop $K$ in $(M^5, \xi)$. The loop is contactomorphic to $\mathbb{S}(pt)$ and its normal bundle is the pull--back of the
trivial bundle over the point. Thus H1 and H2 are satisfied. The symplectic model corresponds to the blow--up of $\C^2$ at the origin, collapsing a neighbourhood of radius $1$, and therefore $E=\CP^1$. Hence, $\mathbb{S}(E)=\mathbb{S}(\CP^1)$, i.e. the standard contact $3$--sphere. This particular case can be seen, at least topologically, as a surgery along a loop.\\

\noindent 2. In the previous example we may symplectically blow--up with radius $k\in \N^*$. The exceptional divisor is then $\S(\CP^1, k\omega_{\CP^1})$, i.e. the sphere bundle associated to the polarization $\SO (k)$ of $\CP^1$, which is the lens space $L(k;1)$ with its standard contact structure. Therefore, even the diffeomorphism type of the blown--up contact manifold changes with the blow--up radius $k\in \N^*$.\\

\noindent Note that there is not natural projection map from $\mathbb{S}(E)$, the exceptional
divisor, to the blow--up locus $\mathbb{S}(W)$. In the case of a loop in a
$5$--dimensional manifold, the exceptional divisor for a radius $1$ blow--up is $\S^3$ and the blow--up locus is
the circle $\S^1$. This is a difference with respect to the symplectic and algebraic cases where the
exceptional divisor is a bundle over the submanifold along which the blow--up is performed. It is true though that there is a natural projection $\mathbb{S}(E) \longrightarrow E \longrightarrow W$, but it does not lift to $\mathbb{S}(W)$.

\begin{remark}
The assumption of the integer radius can be fulfilled in certain cases. For instance in the blow--up along a transverse $\S^1$ we can use the Lemma \ref{lem:shrink}. Therefore the construction in this case will have two natural parameters: the integer radius that determines the topology of the exceptional divisor, and the choice of framing in the spirit of the lemma. The above described construction \`a la Gromov does not show in general the appearance of this second positive integer, this is a reason to introduce a third way of defining the blow--up highlighting these two choices.
\end{remark}

\noindent To conclude this subsection we prove the assertion that allowed us to glue the Boothby--Wang construction over the exceptional divisor in the contact blow--up construction.\\

\noindent{\it Proof of Proposition \ref{propo:bundle_blow}}. We need to find an
appropriate connection on the topological Boothby--Wang manifold corresponding to $\widetilde{V}$.\\


\noindent Notice from the construction of the symplectic blow--up as  given in \cite{MS}, we know  that given a sufficiently small neighbourhood of $E$ in $\widetilde{V}$ it
is possible to choose a symplectic form $\overline{\omega}$ on $\widetilde{V}$ such that
complement of that neighbourhood in $\widetilde{V}$ is symplectomorphic to a small neighbourhood of 
$W$ in $V$.  Furthermore, observe that the exceptional divisor is just the inverse image
of $W$ contained in $V$ as the zero section under the blow--up projection $\phi:\widetilde{V}\longrightarrow V$.\\

\noindent Now recall from the Definition \ref{def:bw} that the contact structure of
$\mathbb{S}(\widetilde{V})$ is determined by the choice of a connection over the associated
line bundle whose curvature is $-i\overline{\omega}$. So let $A$ be the connection over $L$ that determines the contact structure on $\mathbb{S}(V)$, and denote by $U$ an arbitrarily small neighbourhood of $W$ inside $V$. From the construction of the symplectic form $\overline{\omega}$ on $\widetilde{V}$ we can assume that the map $\phi$ is a symplectomorphism of
$V \setminus U$ and $\phi^{-1}(V \setminus U)$. Therefore the connection $\phi^*(A)$ satisfies the required 
properties over $\phi^{-1}(V \setminus U)$. It remains for it to be extended to a
connection all over $\widetilde{V}$ with curvature $-i\overline{\omega}$. By Lemma~\ref{lem:exten_bw} such an extension is possible 
provided that the restriction morphism from $H^1(\phi^{-1}(V \setminus U), \R)\longrightarrow H^1(\widetilde{V}, \R)$ is surjective. It is sufficient to show that
$\pi_1(\widetilde{V}) = \pi_1(\phi^{-1}(V \setminus U))$.\\

\noindent Indeed, observe that $\widetilde{V}$ is homotopic to a $(\CP^{r-1})$--bundle over $W$, with $r \geq 2$, and hence
 $\pi_1(\widetilde{V}) = \pi_1 (W)$ holds. 
Note that the manifold $\phi^{-1}(V \setminus U)$ is diffeomorphic to $V \setminus U$ and the set $V \setminus U$ is homotopy equivalent to a sphere bundle over $W$ with fibers of dimension greater than $2$. From the long exact sequence of homotopy we conclude that 
$$\pi_1(\phi^{-1}(V \setminus U))\cong\pi_1(W).$$
It now follows from Lemma \ref{lem:exten_bw} that
on $\mathbb{S}(\widetilde{V})$ there is a choice of contact form with the required properties. Away from a given small neighbourhood of $\mathbb{S}(E)$ in $\mathbb{S}(\widetilde{V})$ a contact form can be chosen such that it induces a contactomorphism from the complement of such a neighbourhood to the complement of some neighbourhood of $W$ in $V,$ this is because we can choose the symplectic form on $\widetilde{V}$ with the required property. \hfill $\Box$



\section{Blow--up as a quotient} \label{sec:quotient}

In this section we define the contact blow--up of a contact manifold $M$ along a Boothby--Wang contact submanifold $S$ using the notion of contact cuts. 


\subsection{Contact cuts}

Given a  $\S^1$--action on a manifold $M$, topologically the cut construction is based on collapsing the boundary of a tubular neighborhood of a given submanifold invariant by the action. Basic knowledge on the contact reduction procedure is assumed in the next few paragraphs, see ~\cite{Ge}. Let us recall the construction of a contact cut for a contact $\S^1$--action as developed by E. Lerman:
\begin{theorem}\label{thm:contcut} $($Thm. 2.11 in ~\cite{Le1}$)$ Let $(M,\alpha)$ be a contact manifold with a $\S^1$--action preserving $\alpha$ and let
$\mu$ denote its moment map. Suppose that $\S^1$ acts freely on the zero level
set $\mu^{-1}(0)$. Then the set\footnote{The equivalence relation is defined as $m\sim m'\Longrightarrow \mu(m)=\mu(m')=0$ and $m=\theta\cdot m'$ for some $\theta\in\S^1$.}
$$M_{[0,\infty)}:=\{m\in M|\mu(m)\in[0,\infty)\}/\sim$$
is naturally a contact manifold. Moreover, the natural
embedding of the reduced space
$$M_0 := \mu^{-1}(0)/\S^1$$
into $M_{[0,\infty)}$ is contact and the complement
$M_{[0,\infty)}\backslash M_0$ is contactomorphic to the open subset
$$\{m \in M | \mu(m) > 0\}\subset(M,\alpha).$$
\end{theorem}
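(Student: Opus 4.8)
The plan is to exhibit $M_{[0,\infty)}$ as an ordinary contact reduction of a larger contact manifold, so that the statement reduces to the contact reduction procedure recalled in \cite{Ge}. Consider the product $N=M\x\C$ with coordinate $z=x+iy$ on the $\C$ factor, write $\alpha$ also for its pull--back to $N$, and set
$$\beta=\alpha+\tfrac12(x\,dy-y\,dx).$$
First I would verify that $\beta$ is a contact form on $N$. Since $d\beta=d\alpha+dx\wedge dy$ and $(d\alpha)^{n+1}=0$ (it is a $(2n+2)$--form built only from the $2n+1$ directions of the $M$ factor, where $\dim M=2n+1$), one finds
$$\beta\wedge(d\beta)^{n+1}=(n+1)\,\big(\alpha\wedge(d\alpha)^n\big)\wedge dx\wedge dy,$$
which is nowhere zero; note in particular that $\beta$ is contact along the whole core $M\x\{0\}$, not only away from it.

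Next I would introduce the $\S^1$--action on $N$ with infinitesimal generator $Y=X-\partial_\varphi$, where $X$ generates the given action on $M$ and $\partial_\varphi=x\partial_y-y\partial_x$ rotates $\C$; explicitly $\theta\cdot(m,z)=(\theta\cdot m,e^{-i\theta}z)$. As $\alpha$ is $X$--invariant and $\tfrac12(x\,dy-y\,dx)$ is rotation invariant, $\beta$ is $Y$--invariant, and, recalling that the given moment map is $\mu=\alpha(X)$, the contact moment map of this new action is
$$\mu_N=\beta(Y)=\alpha(X)-\tfrac12(x^2+y^2)=\mu-\tfrac12|z|^2.$$
Thus $\mu_N^{-1}(0)=\{(m,z):\mu(m)=\tfrac12|z|^2\}$. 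The crucial point is to check that $Y$ acts freely there: for $z\neq0$ the rotation factor is already free, while the locus $z=0$ sits over $\mu^{-1}(0)$, where freeness is the hypothesis. The same dichotomy shows $Y\neq0$ on $\mu_N^{-1}(0)$; combined with the Cartan identity $d\mu_N=-\iota_Y d\beta$, which forces $Y\in\ker d\beta\cap\ker\beta=\{0\}$ whenever $d\mu_N$ and $\mu_N$ vanish together, this shows $0$ is a regular value. The contact reduction theorem then makes $\mu_N^{-1}(0)/\S^1$ a contact manifold.

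Finally I would identify this reduced space with $M_{[0,\infty)}$ and read off the three assertions. Over $\{\mu>0\}$ the set $\mu_N^{-1}(0)$ is the trivial circle bundle $\{(m,\sqrt{2\mu(m)}\,e^{i\psi})\}$ on which $Y$ acts by $\theta\cdot(m,\psi)=(\theta\cdot m,\psi-\theta)$; the invariant map $(m,\psi)\longmapsto\psi\cdot m$ descends to a diffeomorphism onto $\{\mu>0\}$, which inspection of the reduced form upgrades to a contactomorphism with $(\{\mu>0\},\ker\alpha)$, giving the third bullet. Over $z=0$ the reduction is exactly $\mu^{-1}(0)/\S^1=M_0$, embedded as the contact reduced space of $M$, which gives the second bullet, and the two pieces assemble along $M_0$ into the quotient $M_{[0,\infty)}$ of the statement. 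The step I expect to be the main obstacle is precisely this last gluing: verifying that the set--theoretic bijection $\mu_N^{-1}(0)/\S^1\cong M_{[0,\infty)}$ is a diffeomorphism across the divisor $M_0$, where the quotient changes nature (collapsing the phase of $z$ when $\mu>0$ versus collapsing the $\S^1$--orbits of $M$ when $\mu=0$). Both this smoothness and the compatibility of the contact structures are, however, delivered uniformly by the reduction theorem once the product model above has been set up correctly.
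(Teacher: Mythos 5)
Your argument is correct and is, in essence, Lerman's original proof of this theorem: the paper itself quotes the result from \cite{Le1} without proof, and \cite{Le1} establishes it precisely by realizing $M_{[0,\infty)}$ as the contact reduction at $0$ of $\bigl(M\x\C,\ \alpha+\tfrac12(x\,dy-y\,dx)\bigr)$ under the anti--diagonal circle action, with the same freeness/regularity check and the same two--chart identification over $\{\mu>0\}$ and over $z=0$. The final gluing you flag as the main obstacle is not actually an issue, since the smooth and contact structures on $M_{[0,\infty)}$ are \emph{defined} by transporting them through the set--theoretic bijection with $\mu_N^{-1}(0)/\S^1$, which is what ``naturally a contact manifold'' means in the statement.
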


\begin{remark}
Note that the contact reduction requires the regular value to be $0$, whereas in the symplectic reduction any regular value is licit. This is so because in the contact reduction it is imposed that the orbits of the isotropy subgroup are tangent to the contact structure, see ~\cite{Ge2}.
\end{remark}

\subsection{Blow--up procedure}\label{subsec:bl}
Let $(M^{2n+1},\xi)$ be a contact manifold and $(S,\ker{\alpha})$ a codimension--$2k$ contact submanifold. Suppose that $(S,\ker\alpha)\cong\S_a(W)$ for some symplectic manifold $(W,\omega)$, $a\in\Z^+$, and $\nu_S\cong\underline{\C}^k$ as complex bundles over $S$. We will define the {\it contact blow--up of $M$ along $S$}.

\begin{remark}
Any isocontact embedding\footnote{The embedding $e:(M_1,\xi_1) \longrightarrow (M_2, \xi_2)$ is isocontact if $e^*(\xi_2) = \xi_1$.} of a contact $3$--fold in a sphere has trivial normal bundle. This situation does occur: any closed cooriented $3$--fold admits an isocontact embedding into the standard contact $7$--sphere.
\end{remark}
\noindent A tubular neighbourhood of the contact submanifold $S$ is contactomorphic to
$$S_R=S\times B^{2k}(R)\stackrel{sph. coord.}{\twoheadleftarrow} S\times[0,R)\times \S^{2k-1},\quad\mbox{for some }R\in\R^+,$$
with the contact structure given by $\alpha+r^2\alpha_{std}$, where $\alpha_{std}$ is the standard contact form in $\S^{2k-1}$. Let $b\in\Z^+$ and consider the $\S^1$--action
$$\varphi_{(a,-b)}:\S^1\times S\times [0,R)\times\S^{2k-1}\longrightarrow \S(W)\times [0,R)\times\S^{2k-1}$$
$$(\theta,p,r,z)\longmapsto ((a\theta)\cdot p,r,e^{-2\pi ib\theta}z)$$
This action is generated by the field $X=aR_S-bR_{std}$ where $R_S,R_{std}$ are the Reeb vector fields associated to $\alpha$ and $\alpha_{std}$. 

\noindent The moment map of the above action is
$$\mu_{(a,b)}:S\times B^{2k}(R)\longrightarrow\mathfrak{g}^*\cong \R$$
$$(p,r,z)\longmapsto a-br^2$$
\noindent The contact cut can only be performed in the pre--image of the regular value $0\in\R$, it is thus a necessary condition that $R^2\geq a/b$. This can always be achieved if $b$ is large enough.
\begin{definition}
Let $S\cong\S_a(W)$ be a contact submanifold of $(M,\xi)$ with fixed trivial normal bundle $S\times B^{2k}(R)$.
Let $b\in\Z^+$ be such that $R^2\geq a/b$. The $(a,b)$--contact blow--up $\widetilde{M}_S$ of $M$ along $S$ is defined to be the contact cut of $M$ for the moment map associated to the circle action $\varphi_{(a,-b)}:$
$$\widetilde{M}_S:=M_{\{\mu_{(a,b)}\leq 0\}}$$
\end{definition}
\noindent The collapsed region $\mu_{(a,b)}^{-1}(0)/\sim$ will be called the {\it exceptional divisor}, it is a contact manifold of dimension $2n-1$. The induced $\S^1$--action in the level set
$$\mu_{(a,b)}^{-1}(0)\cong S\times\{\sqrt{a/b}\}\times\S^{2k-1}$$
coincides with the action $\varphi_{(a,-b)}$ defined in Theorem \ref{thm:bw} with $W_1=W$ and $W_2=\CP^{k-1}$. Thus, the orbit space is
$$\mu_{(a,b)}^{-1}(0)/\S^1\cong\S_{(b,a)}(W\times\CP^{k-1})\cong\S(W)\times\S(\CP^{k-1})/\sim.$$

\begin{remark}
Notice that both the topology and the contact structure of the exceptional divisor strongly depend on the choice of the parameters $a$ and $b$. Consequently, so does $\widetilde{M}_S$.
\end{remark}

\noindent{\bf Example}: 1. In the case of a contact $5$--fold, a transverse circle --the simplest contact submanifold-- is replaced by a (quotient of a) standard contact $3$--sphere, as in Section \ref{sec:srgy}. This new construction of the blow--up along a transverse loop will be compared with the previous ones in the next section.\\

\noindent 2. Consider the contact blow--up along a contact $3$--sphere $\S^3\cong\S(\CP^1)\subset M^{2n+1}$. The topology of the exceptional divisor will depend on the element of the corresponding higher homotopy group, cf. Section \ref{sec:bw}.\\

\noindent 3. If in the previous example $(M,\xi)$ is a $5$--dimensional contact manifold, the exceptional divisor of the $(1,k)$ blow--up is contactomorphic to $\S^3$. In higher dimensions, the exceptional divisor of a $(1,k)$ blow--up along $\S^3$ is diffeomorphic to $\S^2\times\S^{2n-3}$ for $n\geq3$ and $k$ even.

\subsection{Blow--up general normal bundle}

We define the contact blow--up along a contact submanifold with a general normal bundle. The construction will clearly coincide with the previous blow--up in the case of a trivial normal bundle.


\subsubsection{Preliminaries} In smooth topology the smooth structure of a neighbourhood of a submanifold is retained by the normal bundle. The contact geometry nearby a contact submanifold $(S,\xi_S)$ is determined by the normal bundle $\nu_S$ along with a conformally symplectic structure. Such a structure exists because $\nu_S$ can be identified with the symplectic orthogonal $\xi_S^\perp$. The statement of the contact neighbourhood theorem is as follows:
\begin{theorem} $($2.5.15 in ~\cite{Ge}$)$
Let $(S_1,M_1)$ and $(S_2,M_2)$ be contact pairs such that $(S_1,\xi_{S_1})$ is contactomorphic to $(S_2,\xi_{S_1})$. If $\xi_{S_1}^\perp\cong \xi_{S_2}^\perp$ as conformally symplectic bundles, then there exists a contactomorphism between suitable neighbourhoods of $S_1$ and $S_2$.
\end{theorem}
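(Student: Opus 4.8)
The plan is to run a relative Moser argument with the submanifold held fixed, exactly paralleling the proof of Weinstein's symplectic neighbourhood theorem but adapted to contact forms. Fix global contact forms $\alpha_1$ on $M_1$ and $\alpha_2$ on $M_2$ with $\ker\alpha_i=\xi_i$. The given contactomorphism $\phi:(S_1,\xi_{S_1})\to(S_2,\xi_{S_2})$ lets me assume, after rescaling $\alpha_2$ by a positive function whose restriction to $S_2$ absorbs the conformal factor, that $\phi^*(\alpha_2|_{S_2})=\alpha_1|_{S_1}$. From now on I identify $S_1$ with $S_2=S$ via $\phi$ and regard $\alpha_1,\alpha_2$ as two contact forms defined near the same submanifold $S$ that restrict to the \emph{same} contact form on $S$.

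The first step is to promote this equality on $S$ to agreement of the full $1$--jet along $S$. Along $S$ the contact hyperplane splits $\xi_i|_S=\xi_S\oplus\xi_S^\perp$, where the symplectic orthogonal is taken with respect to $d\alpha_i|_{\xi_i}$ and is canonically identified with the normal bundle $\nu_S$ carrying its conformally symplectic structure. Using the hypothesis $\xi_{S_1}^\perp\cong\xi_{S_2}^\perp$ as conformally symplectic bundles together with the tubular neighbourhood theorem, I would build a diffeomorphism $\Psi$ between neighbourhoods of $S$ that is the identity on $S$ and whose differential along $S$ carries the splitting, the Reeb line, and the normal symplectic form of $\alpha_1$ to those of $\alpha_2$. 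After replacing $\alpha_2$ by $\Psi^*\alpha_2$ (and a further conformal rescaling) the two contact forms satisfy $\alpha_1=\alpha_2$ and $d\alpha_1=d\alpha_2$ along all of $S$.

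With matching $1$--jets I then apply the Moser trick. Set $\alpha_t=(1-t)\alpha_1+t\alpha_2$ for $t\in[0,1]$; because the two forms agree to first order along $S$, each $\alpha_t$ is contact on a sufficiently small neighbourhood of $S$, uniformly in $t$. I seek a time--dependent vector field $X_t$ and a function $\mu_t$ solving
$$\frac{d}{dt}\alpha_t+\mathcal{L}_{X_t}\alpha_t=\mu_t\,\alpha_t,$$
which, upon expanding $\mathcal{L}_{X_t}$ with Cartan's formula, becomes an algebraic system for $(X_t,\mu_t)$ that is solvable because $d\alpha_t$ is non--degenerate on $\xi_t$ and $\alpha_t(R_t)=1$ for the Reeb field $R_t$. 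The key point is that the inhomogeneous term $\alpha_2-\alpha_1$ vanishes to first order along $S$, so the resulting $X_t$ vanishes along $S$; hence its flow fixes $S$ pointwise and, after shrinking the neighbourhood, exists for all $t\in[0,1]$. The time--$1$ flow pulls $\alpha_2$ back to $\alpha_1$ up to a conformal factor, giving the desired contactomorphism of neighbourhoods.

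The main obstacle is the first step --- producing the diffeomorphism $\Psi$ that matches the entire $1$--jet of the contact data along $S$. This is where both hypotheses are consumed: the contactomorphism of the bases fixes the $\xi_S$ summand and the Reeb direction, while the conformally symplectic isomorphism of normal bundles fixes $d\alpha|_{\xi_S^\perp}$; one must verify that these can be realized \emph{simultaneously} by an honest diffeomorphism of tubular neighbourhoods, and the freedom of conformal rescaling of $\alpha_2$ is exactly what reconciles the conformal ambiguity in the normal data with the normalization $\alpha_1|_S=\alpha_2|_S$. Once the $1$--jets agree, the Moser homotopy is routine.
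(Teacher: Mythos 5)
The paper does not prove this statement: it is imported verbatim as Theorem 2.5.15 of Geiges' \emph{An Introduction to Contact Topology}, so there is no internal proof to compare yours against. Your proposal is, in outline, exactly the standard argument given there: normalize so that the two contact forms agree on $TS$, use the conformally symplectic isomorphism of normal bundles plus the tubular neighbourhood theorem to match the contact data along $S$, then run a relative Gray--Moser argument on the linear interpolation, with the generating vector field vanishing along $S$. Two refinements are worth recording. First, the conformal factor in the normal data is best absorbed by rescaling the fibrewise linear isomorphism $\xi_{S_1}^\perp\to\xi_{S_2}^\perp$ (replace $\Phi$ by $\lambda^{-1/2}\Phi$ to make it honestly symplectic), rather than by rescaling $\alpha_2$: a rescaling of $\alpha_2$ whose restriction to $S_2$ is not identically $1$ would destroy the normalization $\phi^*(\alpha_2|_{S_2})=\alpha_1|_{S_1}$ fixed in your first step. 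Second, you do not actually need full agreement of the $1$--jets: once $\Psi$ is arranged so that the covectors $\alpha_1$ and $\Psi^*\alpha_2$ coincide along $S$, so that $T\Psi$ carries the $d\alpha_1$--orthogonal $\xi_{S_1}^\perp$ onto the $d\alpha_2$--orthogonal $\xi_{S_2}^\perp$, and so that the two symplectic forms agree on that summand, the restriction of $d\alpha_t$ to $\ker\alpha_t=\xi_{S}\oplus\xi_{S}^\perp$ at points of $S$ is block--diagonal with $t$--independent nondegenerate blocks; hence $\alpha_t$ is contact near $S$ uniformly in $t$, and the cross terms of $d\alpha$ involving the Reeb direction of $\alpha|_S$ (which cannot in general be matched) are irrelevant. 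With these points made explicit, your plan is a correct and complete route to the theorem.
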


\noindent There exist contact submanifolds with non--trivial normal bundle in a closed contact manifold. Let us provide some examples.\\

\noindent {\bf Examples}: 1. Let $(M,\xi=\ker \alpha)$ be a cooriented contact manifold and $\xi$ itself be non--trivial as an abstract vector bundle. The contact form provides a contact embedding $\alpha:M\longrightarrow\S(T^*M)$ such that the normal bundle of the contact submanifold $M$ is isomorphic to $\xi$.\\

\noindent 2. Let $(V,\omega)$ be an integral symplectic manifold and $W$ a symplectic submanifold with non--trivial normal bundle. For instance, $(W,V)=(\CP^1,\CP^3)$. Then the normal bundle of the contact submanifold $\S(W)$ in $\S(V)$ is also non--trivial.\\

\noindent 3. Let $(M^{2n+1},\xi)$ be a closed cooriented contact manifold. Consider an isocontact embedding
$$(M^{2n+1},\xi)\longrightarrow (\S^{4n+3},\xi_{std}),$$
see ~\cite{Gr} for the existence of such an embedding. Since the tangent bundle of the spheres are stable, it is simple to give sufficient conditions for the normal bundle to be non--trivial, e.g. $M$ not spin.

\begin{remark}
The contact blow--up construction has been used in another context. Given a complex vector bundle $E$ on $M$, the contact submanifold $S\subset M$ is defined as the vanishing set of a section in $H^0(M,E)$. Then $c_1(\nu(S))=PD([S])\neq0$. This occurs for the base locus of contact Lefschetz pencil decompositions of $(M,\xi)$. See ~\cite{CPP}.
\end{remark}




\subsubsection{Definition} In the blow--up construction for the trivial normal bundle case there are two circle actions. The first one exists on the contact submanifold $S$, since it is a Boothby--Wang manifold, and it is extended to a local neighbourhood. The second circle action is the gauge action provided by the complex structure in the conformally symplectic normal bundle. The latter is still available in the non--trivial normal bundle case, the former can {\it a priori} no longer be extended in a neighbourhood.\\

\noindent We hence require a lifting condition for the circle action on $S$: the appropriate set--up is depicted as in the Diagram 1 in Section \ref{sec:Gromov}:
$$\xymatrix{
&\nu_M(S)\cong\pi^*(V) \ar@{->}[d]
&\mathbb{S}(V) \ar@{->}[d]
\\
&(S,\xi_S)\cong\mathbb{S}_a(W) \ar@{->}_{\pi}[d]
&(V,\overline{\omega}) \ar@{->}[ld]
\\
&(W,\omega)}$$

\noindent where $V$ is a symplectic bundle over a symplectic manifold $W$. Assume $a=1$ for simplicity.\\

\begin{lemma}
In the hypotheses above, the circle action provided by the Boothby--Wang structure can be naturally extended to a neighbourhood of $S$.
\end{lemma}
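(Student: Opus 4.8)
The plan is to exploit the pullback structure $\nu_M(S)\cong\pi^*V$. The Boothby--Wang action on $S=\mathbb{S}(W)$ covers the identity on the base $W$, and any bundle pulled back from $W$ carries a tautological lift of such an action; once this lift is made into a contact action it will be the desired extension.

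First I would record that the circle action furnished by the Boothby--Wang structure is the principal action of the circle fibration $\pi:\mathbb{S}(W)\longrightarrow W$, generated by the Reeb field $R_S$ of the contact form $\alpha_S$. In particular it satisfies $\pi(\theta\cdot s)=\pi(s)$ for every $\theta\in\mathbb{S}^1$ and $s\in S$, so it covers $\mathrm{id}_W$.

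Next I would lift the action to the total space of the normal bundle. Writing $\pi^*V=\{(s,v):s\in S,\ v\in V_{\pi(s)}\}$, the identity $\pi(\theta\cdot s)=\pi(s)$ furnishes a canonical identification $V_{\pi(s)}=V_{\pi(\theta\cdot s)}$, so that $\theta\cdot(s,v):=(\theta\cdot s,v)$ is well defined, covers the base action, and restricts on the zero section to the Boothby--Wang action. To upgrade this smooth lift to a contact action near $S$, I would fix a connection on $V\longrightarrow W$ compatible with the fibre symplectic structure together with a compatible complex structure on the fibres, and pull all of this back along $\pi$ to $\nu_M(S)\cong\pi^*V$. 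Since any tensor pulled back along $\pi$ is constant along the orbits of an action covering $\mathrm{id}_W$, the pulled--back data are automatically $\mathbb{S}^1$--invariant; the standard fibred model $\alpha=\pi^*\alpha_S+\lambda$, where $\lambda$ is the radial primitive of the fibre symplectic form associated with the connection, is then invariant, and the lifted action preserves it.

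Finally, by the contact neighbourhood theorem (2.5.15 in \cite{Ge}) this invariant model is contactomorphic to a genuine neighbourhood $\U(S)\subset M$ via a contactomorphism restricting to the fixed identification $S\cong\mathbb{S}(W)$; pushing the action forward then gives a contact circle action on $\U(S)$ extending $R_S$. The main obstacle is the construction of the invariant model and the verification that it realises the correct conformally symplectic normal data: one has to check that $\alpha=\pi^*\alpha_S+\lambda$ is contact near the zero section and that its conformally symplectic normal bundle is $\pi^*V$ with the prescribed structure, so that the neighbourhood theorem applies with $S$ mapped as required. Transport of the action along the resulting contactomorphism is then formal; the conceptual crux is that the needed invariance is automatic precisely because $\nu_M(S)$ is pulled back from $W$ along an action fixing $W$.
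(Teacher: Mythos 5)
Your proof is correct, and it reaches the conclusion by a route that is genuinely, if mildly, different from the paper's. The paper never touches the total space of $\nu_M(S)$ directly: it observes that $\mathbb{S}(W)$ sits inside the Boothby--Wang manifold $\mathbb{S}(V)$ with conformally symplectic normal bundle $\pi^*V\cong\nu_M(S)$, invokes the contact neighbourhood theorem to get a contactomorphism $\Phi$ from a neighbourhood of $S$ in $M$ to a neighbourhood of $\mathbb{S}(W)$ in $\mathbb{S}(V)$, and then transports the \emph{global principal} $\mathbb{S}^1$--action of the circle bundle $\mathbb{S}(V)\to V$ back along $\Phi$; invariance of the contact form there is automatic, since it is a connection form for that very action. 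You instead build the model by hand on $\pi^*V$ itself: the tautological lift $\theta\cdot(s,v)=(\theta\cdot s,v)$, which is well defined exactly because the Boothby--Wang action covers $\mathrm{id}_W$, together with an explicitly invariant form $p^*\alpha_S+\lambda$ coming from a connection and compatible fibre data pulled back from $W$. Your version is more self-contained --- it does not require assembling $\mathbb{S}(V)$, and in particular sidesteps the integrality of the symplectic form on the total space of $V$ that the Boothby--Wang construction over $V$ implicitly needs --- at the cost of having to verify by hand that the model form is contact near the zero section and induces the prescribed conformally symplectic structure on the normal bundle, which you correctly flag as the point where the work lies. The paper's version buys that invariance and contactness for free and meshes with the local model $\mathbb{S}(W)\subset\mathbb{S}(V)$ of Diagram 1 that the rest of Section 5 and the Gromov construction rely on. Both arguments share the final step: the transported action restricts on $S$ to the original Reeb circle action because the identifying contactomorphism is fixed on $S$, which is all the ``natural extension'' claimed.
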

\begin{proof}
Since $W$ is a symplectically embedded submanifold of $V$, $\S(W)$ is a contact submanifold of $\S(V)$. The tubular neighbourhood theorem tells us that the normal bundle $\nu_M(S)$ is diffeomorphic to a small neighbourhood of $S$ in $M$, but after the smooth isomorphism $\nu_M(S)\cong \pi^*(V)$ the same situation applies to $\S(W)$ in $\S(V)$. Since the isomorphism holds at the level of symplectic bundles, the contact tubular neighbourhood theorem ensures that there exists a contactomorphism $\Phi$ between a contact neighbourhood of the zero section in $\nu(S)$ and a contact neighbourhood of $\S(W)$ in $\S(V)$. Consequently, the circle action in $\S(V)$ can be carried along $\Phi$ to a neighbourhood of $S$. \end{proof}

\noindent Let us spell out the moment map of the circle action. We refer to the circle action on the normal bundle induced by its complex structure as the gauge action. This action is the natural $\S^1$--action when working with a contact pair $(S,M)$. Further, the radius coordinate $r\in\R^{\geq0}$ is a global coordinate regardless of the non--triviality of the normal bundle. The remaining action described above will be referred as the Boothby--Wang action. It is the natural action when identifying a neighbourhood of $S$ in $M$ with a neighbourhood of $\S(W)$ in $\S(V)$ via the map $\Phi$ in the proof of the lemma.

\begin{lemma}
The moment map of the $\S^1$--action $\varphi_{(1,-1)}$ is $1-\Phi(r)^2$.
\end{lemma}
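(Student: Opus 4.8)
The plan is to compute the contact moment map straight from its definition as the contraction of the contact form with the generating vector field, working inside the local model $\S(V)$ furnished by the contactomorphism $\Phi$ of the previous lemma. Recall that for a contact manifold $(N,\alpha)$ carrying an $\S^1$--action that preserves $\alpha$ and is generated by a vector field $X$, the associated moment map is $\mu=\alpha(X)$. For the action $\varphi_{(1,-1)}$ the generating field is $X=R_S-R_{std}$, where $R_S$ is the Boothby--Wang (Reeb) field extended to the neighbourhood of $S$ and $R_{std}$ is the gauge field coming from the complex structure on the normal bundle. By linearity it suffices to evaluate $\alpha$ on each generator separately, so that $\mu=\alpha(R_S)-\alpha(R_{std})$.

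First I would treat the Boothby--Wang term. Transporting everything to $\S(V)$ via $\Phi$, the contact form $\alpha$ is precisely the connection form of Definition \ref{def:bw} on the circle bundle $\S(V)\longrightarrow V$, and $R_S$ is by construction the generator of the principal circle action of this bundle, i.e. the Reeb field of $\alpha$. Hence $\alpha(R_S)=1$ identically. For the gauge term I would invoke the standard prequantization identity: the gauge action on $\S(V)$ is the contact lift of the fibrewise rotation of $V\longrightarrow W$, a Hamiltonian circle action on $(V,\overline{\omega})$ whose Hamiltonian is the squared fibre norm $\rho^2$, and the contact lift of a Hamiltonian flow preserves $\alpha$ and satisfies $\alpha(R_{std})=\rho^2$. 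This reproduces, in the trivial model with contact form $\alpha+r^2\alpha_{std}$, the expected value $(\alpha+r^2\alpha_{std})(R_{std})=r^2$, and serves as a consistency check.

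It remains to pull these computations back through $\Phi$. Since the gauge action is intrinsic to the conformally symplectic normal data $\xi_S^\perp$, the contactomorphism $\Phi$ produced from the contact neighbourhood theorem can be chosen to intertwine the two gauge actions; under such a $\Phi$ the fibre--norm function $\rho$ on $\S(V)$ pulls back to a gauge--invariant radial function, which I denote $\Phi(r)$. Combining the two evaluations then yields $\mu=1-\Phi(r)^2$, as claimed; specialising to a trivial normal bundle gives $\Phi(r)=r$ and recovers the moment map $a-br^2=1-r^2$ of the earlier model. The main obstacle I anticipate is the gauge term: one must justify the identity $\alpha(R_{std})=\rho^2$ from the prequantization/contactization picture and check that $\Phi$ can indeed be taken gauge--equivariant, so that the discrepancy between the abstract radial coordinate $r$ on $\nu_M(S)$ and the fibre norm on $\S(V)$ is accounted for entirely by the reparametrisation $r\mapsto\Phi(r)$.
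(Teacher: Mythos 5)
Your proposal is correct and follows essentially the same route as the paper: the moment map is computed as the contact form evaluated on the generator $R_S-R_{std}$, with the Boothby--Wang term contributing $1$ (the Reeb condition) and the gauge term contributing $-r^2$, the radial coordinate then being expressed through the contactomorphism as $\Phi(r)$. You merely spell out in more detail the prequantization identity for the gauge term and the gauge--equivariance of $\Phi$, points the paper's two--line proof takes for granted.
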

\begin{proof}
The moment map of the gauge action is $-r^2$. For the Boothby--Wang one, the circle action realizes the Reeb vector field and thus its moment map is $1$. We express the $r$ coordinate through the contactomorphism $\Phi$ as $\Phi(r)$. Since we are using the action $\varphi_{(1,-1)}$ the statement follows.
\end{proof}
\noindent Recall that the contact cut can be performed if $0$ lies in the image of the moment map.
\begin{remark}
The same argument using a multiple of the gauge action concludes that we may modify the action in order to ensure this: $\Phi$ maps the zero section to $\S(W)$ and thus the values of $1-b^2\Phi(r)^2$ form a decreasing sequence in $b$ that eventually crosses zero, $\R^{\leq0}$ being bounded below.
\end{remark}
\noindent The Boothby--Wang action may as well be arranged to period $a$: the concatenation action is denoted $\varphi_{(a,-b)}$. We are in position to write the

\begin{definition} $($Contact Blow--Up$)$ Let $S\cong\S_a(W)$ be a contact submanifold of $(M,\xi)$. Let $a,b\in\Z^+$ be such that the origin is contained in the image of the moment map $\mu_{(a,b)}$ for the action $\varphi_{(a,-b)}$. The $($a,b$)$--contact blow--up $\widetilde{M}_S$ of $M$ along $S$ is defined to be the contact cut of $M$ for the action $\varphi_{(a,-b)}$, i.e. $\widetilde{M}_S:=M_{\{\mu_{(a,b)}\leq0\}}$.
\end{definition}

\section{Uniqueness for Transverse Loops}\label{sec:un}

In this section we relate the three constructions of the contact blow--up. The construction that can be performed in the most general situation is the one involving the contact cut. It has two degrees of freedom: a pair of positive integers $a$ and $b$. These two parameters relate to previous integers appearing in the first two constructions. Indeed, the parameter $l$ in the contact surgery blow--up corresponds to $b$. For Gromov's construction, the choice of collapsing radius $k \in \Z^+$ gives rise, in the case of transverse loops, to the exceptional divisor $L(k,1)$ and it corresponds to the parameter $a$. It is quite obvious that the diffeomorphism type of the blown--up manifolds is the same regardless of the chosen construction as soon as the parameters coincide as just mentioned. \\

\noindent Let us turn our attention to the contact structure: we restrict ourselves to the case of transverse loops. Denote by $\overline{M}_b$ the surgery contact blow--up defined in Section \ref{sec:srgy} with parameter $b$. The contact blow--up as defined in Section \ref{sec:Gromov} with radius $a$ is denoted by $M'_a$. And $\widetilde{M}_{(a,b)}$ will be the contact--cut blow--up as defined in Section \ref{sec:quotient}, performed with parameters $(a,b)$. Let us show that uniqueness holds in this case, more precisely we prove the following

\begin{theorem} \label{thm:unique}
Let $(M,\xi)$ be a contact manifold. Performing the blow--up along a fixed transverse loop with the three procedures introduced previously, the resulting blown--up manifolds $\overline{M}_1$, $M'_1$ and $\widetilde{M}_{(1,1)}$ endowed with the blown--up contact structures are contactomorphic. Further, given any pair of integers $(a,b)$, the following contactomorphisms hold:
$$\left(\overline{M}_b,\overline{\xi}_b\right)\cong\left(\widetilde{M}_{(1,b)},\widetilde{\xi}_{(1,b)}\right),\quad \left(M'_a,\xi_a'\right)\cong\left(\widetilde{M}_{(a,1)},\widetilde{\xi}_{(a,1)}\right).$$
\end{theorem}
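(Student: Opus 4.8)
The plan is to show that the surgery construction and Gromov's construction are each a disguised instance of the contact--cut blow--up, thereby reducing the whole statement to the two displayed families of contactomorphisms; the first assertion $\overline{M}_1\cong M'_1\cong\widetilde{M}_{(1,1)}$ then follows by specializing $b=1$ and $a=1$. Throughout, a transverse loop $S\cong\S^1=\S(pt)$ in $M^5$ has trivial normal bundle $\underline{\C}^2$, so by Gray stability a neighbourhood $\U(S)$ is contactomorphic to $\S^1\times B^4(R)$ with the form $d\theta+r^2\alpha_{std}$. Each of the three operations leaves $(M\setminus\U(S),\xi)$ untouched up to contactomorphism --- this is the content of Theorem \ref{thm:blow-up} for the surgery, of Proposition \ref{propo:bundle_blow} for Gromov's approach, and of Theorem \ref{thm:contcut} for the cut --- so the contactomorphism away from the exceptional divisor is automatic and the entire problem localizes to matching the three models on $\U(S)$ and checking that the gluings are compatible.

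First I would establish $(\overline{M}_b,\overline{\xi}_b)\cong(\widetilde{M}_{(1,b)},\widetilde{\xi}_{(1,b)})$. The cut is taken for the action $\varphi_{(1,-b)}$ with moment map $\mu_{(1,b)}=1-br^2$, collapsing the level $r^2=1/b$ and retaining $\{r^2\geq 1/b\}$; its exceptional divisor is $\S_{(b,1)}(pt\times\CP^1)\cong\S_1(\CP^1)=\S^3$, exactly the standard contact sphere appearing in Theorem \ref{thm:blow-up}. The key computation is to perform Lerman's reduction explicitly near $r^2=1/b$: choosing a slice transverse to the $\varphi_{(1,-b)}$--orbits introduces a new disk coordinate normal to $\S(E)$, and the twist integer $b$ entering the $z$--rotation is precisely the integer $l=b$ labelling the gluing diffeomorphism $\phi_l$. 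I would verify that the reduced contact form produced by Theorem \ref{thm:contcut} coincides with the contact structure obtained in the surgery by interpolating between $\lambda$ and $\overline{\lambda}$, using the pullback identity $\phi_l^*\eta=\overline{\lambda}$ recorded before Theorem \ref{thm:blow-up}; where the two normalizations of the interpolation differ I would invoke Gray stability on the annular region, the two contact structures being homotopic through contact structures relative to the common complement.

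Next I would treat $(M'_a,\xi'_a)\cong(\widetilde{M}_{(a,1)},\widetilde{\xi}_{(a,1)})$. In Gromov's construction the local symplectic model is the radius--$a$ blow--up of $\C^2=V$ along $W=\{pt\}$, whose integral symplectic form puts $a\,\omega_{FS}$ on the exceptional $\CP^1$; passing to the Boothby--Wang sphere bundle $\S(\widetilde{V})$ gives exceptional divisor $\S(E)=\S_a(\CP^1)=L(a;1)$. On the other side, the cut $\widetilde{M}_{(a,1)}$ uses $\varphi_{(a,-1)}$ with moment map $a-r^2$, collapses $r^2=a$, and yields the same $\S_{(1,a)}(pt\times\CP^1)=\S_a(\CP^1)$. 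I would match these by identifying the symplectic radius--$a$ condition of Definition \ref{def:sybu} with the collapse locus $r^2=a$ of the cut, and then appeal to the uniqueness of the Boothby--Wang contact structure (Remark \ref{rek:Uniqueness_BW}): both manifolds carry a contact form induced by a connection of curvature $-i\overline{\omega}$ on the line bundle of the same integral symplectic blow--up, and both restrict to the fixed contact structure on the common complement, so Gray's theorem provides the contactomorphism.

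I expect the main obstacle to be the first identification, specifically the passage from Lerman's \emph{canonical} reduced contact form on the $(1,b)$--cut to the \emph{ad hoc} interpolated form of the surgery construction near $\S(E)$. The two are built by genuinely different recipes --- a contact quotient versus an explicit cut--and--paste with a chosen increasing interpolation function --- so reconciling them demands either a careful change of coordinates turning the reduction slice into the surgery gluing $\phi_b$, or a homotopy of contact structures on the collapsing annulus to which Gray stability can be applied. Once this local matching is in place, the invariance of all three constructions outside $\U(S)$ makes the global contactomorphisms immediate.
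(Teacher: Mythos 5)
Your overall skeleton --- localize to a neighbourhood of the transverse loop, note that all three operations leave the complement untouched, and then match the local models --- is the same as the paper's, and you correctly identify where the real difficulty sits: reconciling three genuinely different local recipes (interpolated surgery form, Boothby--Wang connection form, Lerman's reduced form) on the filled-in region. The problem is that at exactly that point you defer to ``a homotopy of contact structures on the collapsing annulus to which Gray stability can be applied'' without saying why such a homotopy exists. Gray stability does not produce a homotopy of contact structures; it consumes one. Since the three local models are built by different procedures, there is no a priori reason the straight-line interpolation between, say, Lerman's reduced form and the surgered form stays contact all the way down to the exceptional divisor, and this is precisely the content that must be proved.

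The paper supplies the missing mechanism by packaging all three local models as \emph{radial contact fibrations} over the disk, i.e.\ contact structures of the form $\ker\{\alpha_F+H\,d\theta\}$ with $H=O(r^2)$ and $\partial H/\partial r>0$ (in the $\Z_a$--equivariant version when lens spaces occur), and then proving a comparison lemma (Lemma~\ref{lem:unique}): two such structures agreeing near the boundary are isotopic because the set of admissible Hamiltonians $H$ is \emph{convex}, so $H_t=(1-t)H_0+tH_1$ gives the required path of contact structures rel boundary, to which Gray's theorem is then applied. The body of the proof consists of verifying that each of the three constructions, after the coordinate change $\phi_{(a,b)}$, really does extend the common annulus model to such a fibration over the whole disk --- the surgery by definition, Gromov's construction because the pulled-back Reeb field $\frac{1}{a}(\partial_\theta-bR_{std})$ extends over the disk and the space of connections is affine, and the cut because the generating field pulls back to $\partial_\theta$. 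Your pairwise reductions (surgery and Gromov each to the cut) could in principle be made to work, but each one would still require an argument of exactly this convexity type; without it, the proposal establishes the parameter dictionary and the diffeomorphism types but not the contactomorphisms. A smaller quibble: you should also record that the comparison must be equivariant under the $\Z_a$--action when $a>1$, since the Gromov model with radius $a$ is a quotient of the sphere fibration by $\Z_a$, and the convexity argument has to be run in the space of equivariant Hamiltonians.
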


The relation between the different constructions is already hinted in Section \ref{sec:srgy}. Since the exceptional contact divisors coincide and the procedure is of a local nature, i.e. the contact manifold is not altered away from a neighbourhood of the embedded transverse loop, the study should focus on the natural annulus contact fibration. Let us review a few facts.\\

\noindent A contact fibration is a fibration $(M, \xi) \longrightarrow B$ such that the fibers are contact submanifolds. We consider contact fibrations over the disk $f: (V, \xi) \longrightarrow B^2$. The base being contractible, the fibration is trivial and we also assume it to be trivialized. Let us introduce the following
\begin{definition}
Let $(r,\theta)$ be polar coordinates on the disk $B^2$. A trivialized contact fibration over the disk $\pi: F \times B^2 \longrightarrow B^2$ is said to be radial if the contact structure admits the following equation 
\begin{equation}
\ker \alpha_0 = \ker  \{ \alpha_F + H d\theta\}, \label{eq:radial}
\end{equation}
where $H: F \times B^{2} \longrightarrow\R$ is a smooth function such that $H=O(r^2)$.
\end{definition}
\noindent Notice that for the total space of a radial contact fibration to have an induced contact structure it is required that
\begin{equation}
\frac{\partial H}{\partial r} >0,\mbox{ for }r>0. \label{eq:contact}
\end{equation}
It is convenient to extend the previous definition in order to include the general situation, where lens spaces may appear as exceptional contact divisors:
\begin{definition}
A trivialized radial contact fibration $\pi: \S^{2n-1} \times B^2 \longrightarrow B^2$ is $\Z_a$--equivariant if the natural diagonal $\Z_a$--action on the fibration preserves the radial contact structure.
\end{definition}
The action in the fiber sphere $\S^{2n-1}$ is generated by an $\frac{2\pi}{a}$--rotation along the Hopf fiber, whereas the action in the base $B^2$ is the standard $\frac{2\pi}{a}$--rotation in the disk. They preserve respectively the standard contact structure in $\S^{2n-1}$ and the $1$--form $d\theta$ in the disk. Hence, the fibration becomes equivariant if the function $H$ is preserved by the action. \\

\noindent Topologically it is fairly straightforward that the blow--up operations we are performing are tantamount to a priori different fillings of the fibration over an annulus to form a manifold lying over the disk -- this being always considered up to a finite action $\Z_a$, for lens spaces fillings. The transition from $\S^1\times B^{2n}$ to $B^2\times\S^{2n-1}$ can be understood in these terms: both fibrations over the annulus --produced by restricting to $r\in (0.5,1)$-- are filled in the origin with a circle and $\S^{2n-1}$ respectively. In the transverse loop case it will be enough to use the following
%
\begin{lemma} \label{lem:unique}
Let $V$ be a manifold with contact structures $\xi_0$ and $\xi_1$. Assume that there are two smoothly isotopic diffeomorphisms 
$$f_0: V \longrightarrow F \times B^2\mbox{ and }f_1: V \longrightarrow F \times B^2,$$
which are contactomorphisms\footnote{A priori, not necessarily contact isotopic.} for $\xi_0$ and $\xi_1$ respectively. Let the two fibrations be radial contact fibrations with common contact fiber $F$ and satisfying that the diffeomorphism
$$f_1 \circ f_0^{-1}: F \times B^2 \longrightarrow F \times B^2$$
is the identity close to the boundary. Then, the contact structures $\xi_0$ are $\xi_1$ are isotopic. \\

\noindent Further, if the fiber is $F\cong\S^{2n-1}$ and the contact fibrations are $\Z_a$--equivariant, the contact structures are isotopic through $\Z_a$--equivariant contactomorphisms.
\end{lemma}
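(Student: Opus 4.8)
The plan is to move both structures onto the common model $F\times B^2$, connect them there by a straight--line path through radial contact structures, and then transport the resulting Gray--stability isotopy back to $V$, verifying that the diffeomorphism it produces is smoothly isotopic to the identity. First I would push the structures forward, writing $\eta_0=(f_0)_*\xi_0$ and $\eta_1=(f_1)_*\xi_1$. By hypothesis these are radial contact fibrations with the same fiber form $\alpha_F$, so $\eta_i=\ker(\alpha_F+H_i\,d\theta)$ with $H_i=O(r^2)$ and $\partial H_i/\partial r>0$ for $r>0$, as in (\ref{eq:radial}) and (\ref{eq:contact}). The assumption that $f_1\circ f_0^{-1}$ is the identity near the boundary means the two trivializations, and hence the two radial structures, coincide close to $\partial B^2$; in particular $H_0=H_1$ there.

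The crux is that the defining forms depend affinely on $H$. Setting $\alpha_i=\alpha_F+H_i\,d\theta$, the straight--line family
$$\alpha_t=(1-t)\alpha_0+t\alpha_1=\alpha_F+\big((1-t)H_0+tH_1\big)\,d\theta$$
is again radial with function $H_t=(1-t)H_0+tH_1$, and both defining conditions are convex: $H_t=O(r^2)$, while $\partial H_t/\partial r=(1-t)\,\partial H_0/\partial r+t\,\partial H_1/\partial r>0$ for $r>0$. Hence each $\alpha_t$ is a contact form by (\ref{eq:contact}), and $\{\eta_t=\ker\alpha_t\}$ is a smooth path of contact structures from $\eta_0$ to $\eta_1$ which, since $H_0=H_1$ near $\partial B^2$, is constant there.

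I would then apply Gray stability to this path. Because the variation $\dot\alpha_t=(H_1-H_0)\,d\theta$ is supported away from the boundary, the Moser vector field solving the stability equation vanishes near $\partial(F\times B^2)$, so its flow is an ambient isotopy $\Psi_t$ that is the identity near the boundary, with $\Psi_0=\mathrm{id}$ and $(\Psi_t)_*\eta_0=\eta_t$; put $\Psi=\Psi_1$, so that $\Psi^*\eta_1=\eta_0$. Transporting back, set $\Phi=f_1^{-1}\circ\Psi\circ f_0$; then $\Phi^*\xi_1=f_0^*\Psi^*\eta_1=f_0^*\eta_0=\xi_0$, so $\Phi$ carries $\xi_0$ to $\xi_1$. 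Choosing a smooth isotopy $f_t$ from $f_0$ to $f_1$, the family $\Phi_t=f_t^{-1}\circ\Psi_t\circ f_0$ runs from $\Phi_0=\mathrm{id}$ to $\Phi_1=\Phi$, so $\Phi$ is isotopic to the identity and therefore $\xi_0$ and $\xi_1$ are isotopic contact structures.

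For the equivariant statement, every step can be made $\Z_a$--equivariant: a radial structure is preserved by the action exactly when $H$ is, so $\Z_a$--invariance of $H_0,H_1$ passes to the whole interpolation, the stability equation becomes $\Z_a$--equivariant and so is its solution $\Psi_t$, and together with the equivariance of $f_0,f_1$ this makes $\Phi_t$ equivariant, yielding an isotopy through $\Z_a$--equivariant contactomorphisms. The step I expect to be the main obstacle is the boundary bookkeeping rather than the interpolation: the model $F\times B^2$ has boundary, so Gray stability is not available off the shelf, and it is exactly the identity--near--the--boundary hypothesis that forces $H_0=H_1$ near $\partial B^2$, renders the interpolation constant there, and confines the Moser flow to the interior---without which $\Psi_t$ could fail to preserve $F\times B^2$. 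The convexity of the radial contact condition is what legitimizes the naive linear interpolation and is the reason the radial framework was introduced.
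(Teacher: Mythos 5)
Your proposal is correct and follows essentially the same route as the paper: reduce to two radial contact structures $\alpha_F+H_i\,d\theta$ on the model $F\times B^2$ agreeing near the boundary, connect them by the linear path of Hamiltonians using convexity of the conditions $H=O(r^2)$ and $\partial H/\partial r>0$, and conclude (the equivariant case following from convexity of the space of invariant Hamiltonians). The paper leaves the Gray--stability step and the transport back to $V$ implicit, whereas you spell them out, including the vanishing of the Moser field near the boundary and the smooth isotopy $f_t$ making $\Phi$ isotopic to the identity; this is a welcome elaboration, not a different argument.
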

\begin{proof}
This can be reduced to the setup with a fibration $F \times B^2$ with two different radial contact structures
\begin{eqnarray*}
\alpha_0 & = & \alpha_F + H_0 d\theta, \\
\alpha_1 & = & \alpha_F + H_1 d\theta,
\end{eqnarray*}
such that the Hamiltonians $H_0$ and $H_1$ coincide close to the boundary. In that setting, we just need to construct  a path of functions $H_t: F \times B^2 \longrightarrow \R$ connecting them, relative to the boundary, satisfying the contact equation (\ref{eq:contact}) and the condition $H_t=O(r^2)$. But this is possible since the space of such functions is convex. \\

\noindent The argument still works in the equivariant case: the only sentence to be added is that the space of equivariant Hamiltonians is also convex.
\end{proof}

\noindent Thus, to conclude uniqueness we study the contact topology of the different blow--up constructions and ensure that the lemma applies.\\


\noindent {\it Proof of Theorem \ref{thm:unique}}: Let us describe the common model fibration that underlies the three constructions in this case. Consider a standard contact neighbourhood $\S^1\times (0,2) \times \S^{2n-1}$ of the given fixed loop and the morphism
\begin{eqnarray*}
\phi_{(a,b)}: (\S^1 \times (0,2) \times \S^{2n-1}) & \longrightarrow & \S^1 \times (0,2) \times \S^{2n-1} \\
(\theta, r, z) & \longrightarrow & (a\theta, r, e^{2\pi i b \theta} z).
\end{eqnarray*}
It does generalize the diffeomorphism provided by equation (\ref{eq:change}) that reflects the case $a=1$. If $a$ is greater than $1$, it becomes a $a:1$ covering. The covering transformation is provided by $\Z_a$ acting through:
\begin{eqnarray*}
\Z_a \times (\S^1 \times (0,2) \times \S^{2n-1}) & \longrightarrow & (\S^1 \times (0,2) \times \S^{2n-1}) \\
(l, (\theta,r, p)) & \longrightarrow & \left(\frac{2\pi l}{a} + \theta, r, e^{2\pi i bl /a}p\right),
\end{eqnarray*}
which is free as long as $(a,b)=1$. To understand the change in the contact structure, note that the pull--back of the standard contact form $\eta= d\theta -r^2 \alpha_{std}$ is given by
$$\lambda= \phi^*_{(a,b)} \eta = (-r^2)\cdot [(b-ar^{-2})d\theta+\alpha_{std}].$$
Denote by $R_0=\left(\frac{b}{a}\right)^{1/2}$ the critical radius where the distribution becomes horizontal. In these coordinates, for any fixed small $\varepsilon>0$, the projection onto the first two factors
$$\pi: \S^1\times (R_0+\varepsilon,  2) \times \S^{2n-1} \longrightarrow \S^1\times (R_0+\varepsilon,  2)$$
provides a radial contact fibration on the annulus and since the function $(ar^{-2}-b)$ is strictly positive in $(R_0+\varepsilon, 2)$, it can be extended to the interior of the disk to a $\Z_a$--equivariant radial contact fibration. In order to glue back the model to the manifold we should quotient the equivariant contact fibration by $\Z_a$, this allows us to use the map $\phi_{(a,b)}$ to insert the model back into the manifold. \\

\noindent It is thus left to verify that the three blow--up procedures provide examples of such an extension for particular values of $(a,b)$. Then Lemma \ref{lem:unique} will apply to provide the uniqueness of the constructions. Note that the contact surgery blow--up construction is by definition a radial contact fibration, with $a=1$, as shown in Section \ref{sec:srgy}. Let us study the two remaining cases.\\


\noindent To understand the proof in Section \ref{sec:Gromov}, let us proceed backwards and instead of applying the Boothby--Wang construction, we produce a contact structure and then quotient the resulting contact manifold by the Reeb $\S^1$--action to study whether it is the correct object. Once the coordinate change $\phi_{(a,b)}$ is performed, the Reeb vector field $\partial_{\theta}$ becomes
$$\phi_{(a,b)}^*\left(\partial_\theta\right) = \frac{1}{a}\left(\partial_\theta -bR_{std}\right).$$
This vector field extends to the interior of the disk fibration and so we may quotient the resulting manifold $B^2 \times \S^{2n-1}$. We obtain the blown--up symplectic ball $\widetilde{B}^{2n}$ as its quotient. We can further quotient by the free $\Z_a$--action to obtain a non--trivial fibration over the disk $B^2$. This proves that a suitable choice of connection leads  to an equivariant contact fibration. \\

\noindent There are other choices of connection though. From the principal bundle point of view, a radial contact fibration over the annulus $\S^1 \times (0,2)$ corresponds to a connection on
$$B^2 \times \S^{2n-1} \longrightarrow\widetilde{B}^{2n}.$$
Certainly, after Proposition \ref{propo:bundle_blow} the contact structure is fixed with the choice of a connection. Note that the space of connections is affine and thus, after Gray's stability theorem, the resulting contact structures are contact isotopic for different choices of connections. In conclusion, this second model also provides an extension of the model fibration.\\

\noindent We describe the third procedure also beginning with the resulting contact manifold and giving the pull--back of the action. This contact cut construction is also an equivariant radial contact fibration since the pull--back of the vector field generating the $\S^1$--action, that is $$X=a\partial_\theta-bR_{std},$$
is expressed as $\phi^*_{(a,b)}X= \partial_\theta$ after the coordinate change. There, the contact cut is just an equivariant radial contact fibration, see the proof of Theorem 2.11 in ~\cite{Le1}.
\hfill $\Box$ \\

\begin{remark}
Using Lemma \ref{lem:unique}, we can show that the contact blow--up is unique up to the choice of a trivializing chart of the neighbourhood of the transverse loop. In order to prove the uniqueness of the blow--up along transverse loops, we would need to study the space of isocontact embeddings of the contact manifold $\S^1\times B^{2n}$ in $M$. It is probably false that it is connected, which is the requirement needed to ensure the uniqueness of the blow--up once the parameters $a,b$ are fixed.
\end{remark}

\end{document}